\newenvironment{customthm}[1]
  {\innercustomthm}
  {\endinnercustomthm}
\newcommand{\be}{\begin{eqnarray}}
\newcommand{\ee}{\end{eqnarray}}
\newcommand{\bea}{\begin{eqnarray}}
\newcommand{\eea}{\end{eqnarray}}
\newcommand{\p}{\partial}
\let\th=\theta
\let\w=\omega
\let\W=\Omega
\let\l=\labmda
\let\a=\alpha
\let\b=\beta
\let\g=\gamma
\let\G=\Gamma
\let\pa=\partial
\let\D=\Delta
\let\l=\lambda
\let\t=\tau
\newcommand{\wt}{{\omega}_{\mathsf{t}}}
\DeclareMathOperator{\Id}{Id}
\theoremstyle{plain}
\newtheorem{thm}{Theorem}
\newtheorem{prop}{Proposition}
\theoremstyle{definition}
\newtheorem{dfn}{Definition}
\newtheorem{ex}{Example}
\newtheorem{rem}[thm]{Remark}
\theoremstyle{remark}
\let\th=\theta
\DeclareMathOperator{\rk}{rk}
\numberwithin{equation}{section}
\begin{document}
\title[Derivations of quasi modular forms from mirror symmetry]{The algebra of derivations of quasi modular forms from mirror symmetry}

\author{Murad Alim}
	\email{murad.alim@uni-hamburg.de}

\author{Vadym Kurylenko}
	\email{vadym.kurylenko@uni-hamburg.de}
	
	\author{Martin Vogrin}
	\email{martin.vogrin@uni-hamburg.de}

	\address{FB Mathematik, Universit\"at Hamburg, Bundesstr. 55, 20146, Hamburg, Germany}
	\maketitle
\begin{abstract}

We study moduli spaces of mirror non-compact Calabi-Yau threefolds enhanced with choices of differential forms. The differential forms are elements of the middle dimensional cohomology whose variation is described by a variation of mixed Hodge structures which is equipped with a flat Gauss-Manin connection. We construct graded differential rings of special functions on these moduli spaces and show that they contain rings of quasi-modular forms. We show that the algebra of derivations of quasi-modular forms can be obtained from the Gauss--Manin connection contracted with vector fields on the enhanced moduli spaces. We provide examples for this construction given by the mirrors of the canonical bundles of $\mathbb{P}^2$ and $\mathbb{F}_2$.

\end{abstract}

\section{Introduction}

The study of mirror symmetry, which was discovered in the context of string theory, has had a remarkable impact on mathematics putting forward new connections between different research areas such as symplectic and algebraic geometry. In particular the field of enumerative geometry has benefited greatly from the study of mirror symmetry starting from the predictions of the numbers of rational curves of arbitrary degree on the quintic from the study of variation of Hodge structure of the mirror family \cite{Candelas:1991}, see also \cite{Cox:1999book,Vafa:2001school} and references therein.

A particularly fruitful arena of studies has been the interplay of modularity and geometry in the study of generating functions appearing in the enumerative geometry of families of Calabi-Yau (CY) manifolds. The quasi-modular forms studied by Kaneko and Zagier \cite{kaneko1995} appear for instance explicitly in the study of higher genus mirror symmetry of families of elliptic curves \cite{Dijkgraaf:1995elliptic}. The appearance of quasi-modular forms has moreover been a recurrent theme in the study of higher genus Gromov-Witten invariants for families of CY threefolds. 
The modularity of Gromov-Witten generating functions of CY families is related to the appearance of elliptic curves or K3 surfaces in the geometries. This can be either explicitly as factors of product spaces as for instance in \cite{Bershadsky:1994}, see also \cite{Marino:2003lecture} and references therein as well as \cite{oberdieck2016curve} for a recent treatment. Alternatively the CY families are elliptic or K3 fibrations as in e.~g.~ \cite{Hosono:1999ratell,Hosono:2003holanom,Klemm:2008FHSV,Alim:2012,Klemm:2012}. In the case of mirror symmetry of non-compact CY families the modularity can be traced to the appearance of mirror curves, a locus where the mirror families of canonical bundles over Fano manifolds degenerate. The latter modularity has been extensively studied, see e.~g.~\cite{Huang:2007holanomaly, Aganagic:2008, alim2014special, coates2018gromov}, as well as \cite{haghighat2017mirror} and references therein.

For higher genus mirror symmetry of generic families of CY threefolds a set of special functions on the moduli spaces which form a differential ring, i.~e.~are closed under derivations, was put forward for the mirror quintic by Yamaguchi and Yau \cite{Yamaguchi:2004} and generalized to arbitrary families in \cite{Alim:2007}. It was shown that the generating functions of higher genus Gromov-Witten invariants are polynomial in these special functions. An isomorphic differential ring of special functions for the mirror quintic was independently put forward by Movasati \cite{Movasati:20112} (see also \cite{Movasati:20111}),\footnote{The equivalence of the two constructions was shown in \cite{Alim:2016} and generalized to other families.} working on the moduli space of the mirror quintic enhanced with choices of differential forms. The motivation of Movasati's construction came from the study of the geometric realization of modular forms which we briefly outline in the following.

In \cite{Movasati:2008ell}, Movasati defined the moduli space $\mathsf{T}$ of pairs $(E,(\omega_1,\omega_2))$, where $E$ is an elliptic curve and $(\omega_1,\omega_2)$ is a basis of $H^1(E, \mathbb{C})$ such that $\w_1$ is holomorphic and the intersection pairing of $\w_1$ and $\w_2$ is 1.  It was observed that the ring of regular function $\mathcal{O}_\mathsf{T}$  is closely related to the ring of quasi-modular forms and can be equipped with a
differential structure coming from a special vector field on $\mathsf{T}$,  equivalent to the Ramanujan differential relations between the  Eisenstein series 
\[
     \pa_\t E_2 =\frac{E_2^2-E_4}{12},\quad \pa_\t E_4=\frac{E_4E_2-E_6}{3},\quad  \pa_\t E_6=\frac{E_6E_2-E_4^2}{2}\,, \quad \pa_\t:=\frac{1}{2\pi i} \frac{\partial}{\partial \tau}\,.
\]
Moreover, two other natural differential structures on $\mathcal{O}_{\mathsf{T}}$ were identified in \cite{Movasati:20113}, such that all together they constitute an  $\mathfrak{sl} _2(\mathbb{C})$ Lie algebra.  This Lie algebra is a realization of the $\mathfrak{sl}_2( \mathbb{C})$ triple of derivations acting on the ring of quasi-modular forms, see for instance \cite{zagier2008elliptic}. 

A similar construction can be carried out for more general varieties. Consider a  pair $(X,\omega)$, where $X$ is a compact Calabi-Yau variety  of dimension $n$ of fixed topological type, i.e. $X$ is diffeomorphic to some fixed variety $X_0$, and $\w$ is a basis of $H^n(X, \mathbb{C})$, respecting the Hodge filtration $\mathcal{F}^\bullet$ (see Definition \ref{dfn:enhco}) and such that the intersection pairing in this basis takes the form of a fixed constant matrix. Let $\mathsf{T}$ be a moduli space of such pairs. Analogously to the case of elliptic curves, there is a set of distinguished differential structures on the ring of regular functions $\mathcal{O}_\mathsf{T}$. They are defined by the special (modular) vector fields $\mathsf{R}_i$ on $\mathsf{T}$, defined by \be
\label{eq:mvf}
\nabla_{\mathsf{R}} \wt = \begin{pmatrix}
 0_{f_n,f_n} & *_{f_n,f_{n-1}} & 0_{f_n,f_{n-2}}& \dots &0_{f_n,f_0}\\
0_{f_{n-1},f_n} & 0_{f_{n-1},f_{n-1}} & *_{f_{n-1},f_{n-2}}& \dots &0_{f_{n-1},f_0}\\
\vdots&\vdots&\vdots&\vdots&\vdots\\
0_{f_1,f_n} & 0_{f_1,f_{n-1}}& 0_{f_1,f_{n-2}}&\dots&*_{f_1,f_0}\\
0_{f_0,f_n} & 0_{f_0,f_{n-1}} & 0_{f_0,f_{n-2}}& \dots &0_{f_0,f_0}
\end{pmatrix} \wt
,\ee
where $\omega_{\mathsf{t}}$ is a frame of $\mathcal{H}^n(\mathsf{X} / \mathsf{T})$ defined in the section \ref{sectionenhanced} , $f_k = \rk \mathcal{F}^k / \mathcal{F}^{k+1}$ and $*_{a,b}$ is an $a \times b$ matrix with entries in $\mathcal{O}_\mathsf{T}$. 

The construction of moduli spaces, together with the modular vector fields on them was carried out for a number of compact families: compact CY threefolds in \cite{Movasati:20112,Alim:2016,Alim:2014}, Dwork families in \cite{Movasati:2016,Nikdelan:2017},  genus 2 hyperelliptic curves in \cite{cao2019gauss}, K3 surfaces in \cite{Alim:2014,Nikdelan:2017}, elliptic K3 surfaces in \cite{Doran:2014,Alim:2018} and abelian varieties in \cite{fonseca2018higher}. The program of constructing moduli spaces of varieties enhanced with differential forms and investigating their properties was named Gauss-Manin Connection in Disguise (GMCD), see \cite{Movasati:2017}. 

Besides the (modular) vector field, there are other distinguished vector fields inducing differential structures on $\mathcal{O}_\mathsf{T}$.   Let $\mathsf{G}$ be the following  algebraic group 
\[\label{eq:lgr}
	\mathsf{G}=\{\mathsf{g}\in {\rm GL}( b_n,\mathbb{C})|~ \mathsf{g}~\mathrm{preserves~ filtration~and}~\mathsf{g}\Phi\mathsf{g}^{\rm tr}=\Phi\},
\]
where $b_n=\dim H^n(X, \mathbb{C})$ is the $n$-th Betti number, $\Phi$ is a constant skew-symmetric matrix, and let ${\rm Lie}(\mathsf{G})$ denote its Lie algebra. The special vector fields $\mathsf{R}_\mathfrak{g}$ on $\mathsf{T}$ can be defined as
\[
    \nabla_{\mathsf{R}_\mathfrak{g}}\omega_\mathsf{t}=\mathfrak{g}\omega_\mathsf{t},\qquad \mathfrak{g}\in{\rm Lie}(\mathsf{G}).
\]
Any vector field on $\mathsf{T}$ can be viewed as a derivation on $\mathcal{O}_{\mathsf{T}}$. We refer to the algebra of derivations coming from the vector fields ${\mathsf{R}_\mathfrak{g}}$ and the modular vector fields $\mathsf{R}_i$ as the Gauss-Manin Lie algebra.\footnote{Note that it has also been called AMSY Lie algebra in \cite{Nikdelan:2017,Nikdelan:2019}.} It is a a generalization of the algebra of derivations on the ring of quasi-modular forms to more general rings associated to CY varieties.  The algebra of derivations for elliptic curves was characterized in the framework of GMCD in \cite{Movasati:20113}.  I was computed for compact CY threefolds in \cite{Alim:2016}, for Dwork families in \cite{Movasati:2016,Nikdelan:2017} and for elliptic K3 surfaces in \cite{Alim:2018}. In \cite{Haghighat:2015} the framework of GMCD was studied in the case of elliptically fibered CY manifolds of various dimensions. In particular the authors of \cite{Haghighat:2015} identify an $\mathfrak{sl}_2( \mathbb{C})$ Lie subalgebra corresponding to the algebra of derivations of quasi-modular forms, which in their case is directly related to the modularity of the elliptic fiber. In the present work, we identify an $\mathfrak{sl}_2( \mathbb{C})$ Lie subalgebra of the Gauss--Manin Lie algebra which is more indirect and which is related to the modular structure of the mirror curves appearing in the study of mirror symmetry of non-compact CY threefolds.

This work is concerned with the study of the local structure of moduli spaces $\mathsf{T}$ for non-compact CY threefolds mirror to toric CY threefolds. These are of the form
\[ \label{ref:fami}
{\mathcal{X}} = \left\{   u v + \sum_{i=0}^{s-1}a_i y_i = 0  \; | ~ u, v \in \mathbb{C},~ y_i \in \mathbb{C}^{*}, \prod_{i=0}^{s-1} y_i^{Q^i_k} = 1  \right\}, \] 
where $Q^i_k$ is the charge matrix defining the toric CY, $s$ is the number of 1-dimensional cones in the corresponding fan and $k$ runs from $1$ to $s-3$.      
The middle cohomology of such varieties does not carry a pure Hodge structure, but instead -- a mixed Hodge structure. We construct locally a moduli space $\mathsf{T}$ of non-compact CY varieties enhanced with differential forms. By (not necessarily compact) CY threefold enhanced with differential forms we refer to the pair $(X,\omega)$, where $X$ is a CY threefold and $\w$ is a basis of its middle cohomology, respecting both the Hodge filtration $F^\bullet$ and the weight filtration $W_\bullet$ on $H^3(X,\mathbb{C})$. We further require that the pairing on $\operatorname{Gr}_i^W=W
_i/W_{i-1}$ takes the form of some fixed matrix $\Phi_i$ for all $i$. As for compact families, the ring of regular functions $\mathcal{O}_\mathsf{T}$ can be equipped with a set of derivations coming from the modular vector fields \eqref{eq:mvf}, as well as the derivations corresponding to elements in ${\rm Lie}(\mathsf{G})$, giving the full Gauss-Manin Lie algebra. 

The program is applied to two cases of interest, the mirror families to local $\mathbb{P}^2$ and local Hirzebruch surface  $\mathbb{F}_2$, as an illustration. The developed techniques are universal and can be generalized to other non-compact CY varieties, as well as variations of mixed Hodge structure associated to different geometries. For the mirrors of local $\mathbb{P}^2$ and $\mathbb{F}_2$, the local structure of the moduli spaces can partially be characterized by the inclusion of differential rings of the mirror curves into the differential ring of the non-compact CY threefold family. The following structural theorem gives the isomorphism of differential rings in the case of the mirror of local $\mathbb{P}^2$:
\begin{thm}\label{thm:1}
There is an isomorphism between the ring of regular functions $\mathcal{O}_{\mathsf{T}}$ for the mirror of local $\mathbb{P}^2$ equipped with the differential structure coming from a modular vector field $\mathsf{R}$ on $\mathsf{T}$ and the ring of quasi-modular forms on $\Gamma_0(3)$ with the derivation $\pa_\tau$. 
\end{thm}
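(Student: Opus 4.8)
The plan is to make both sides of the claimed isomorphism completely explicit and then match generators together with the action of the two derivations. First I would describe the moduli space $\mathsf{T}$ for the mirror of local $\mathbb{P}^2$ concretely. Its base is the one-dimensional complex-structure moduli space of the family \eqref{ref:fami}, coordinatized by the single complex-structure parameter, while the remaining directions record the choice of frame $\wt$ of $\mathcal{H}^3(\mathsf{X}/\mathsf{T})$ adapted to both the weight filtration $W_\bullet$ and the Hodge filtration $F^\bullet$ and normalized so that the graded pairings equal the fixed matrices $\Phi_i$. This presents $\mathcal{O}_\mathsf{T}$ as the ring generated by the complex-structure coordinate together with the entries of the frame-change matrix that survive the filtration and pairing constraints. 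A dimension count of these data fixes the transcendence degree of $\mathcal{O}_\mathsf{T}$; I expect it to equal $3$, matching $\operatorname{tr.deg}\widetilde M_*(\Gamma_0(3)) = 3$, since $\widetilde M_*(\Gamma_0(3)) = M_*(\Gamma_0(3))[E_2]$ and $X_0(3)$ is a curve.

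Next I would isolate the arithmetic input. The only nontrivial part of the mixed Hodge structure on $H^3$ of the mirror is carried by the mirror curve, which for local $\mathbb{P}^2$ is an elliptic curve equipped with a level-$3$ structure; the weight filtration $W_\bullet$ splits off precisely this pure weight-one piece, and the block-shift normal form \eqref{eq:mvf} is compatible with this splitting. I would compute the Gauss--Manin connection $\nabla$ in the frame $\wt$, equivalently read it off the Picard--Fuchs operator of the family, which for local $\mathbb{P}^2$ reduces to the weight-one Picard--Fuchs equation of the mirror curve. The modular vector field $\mathsf{R}$ is then determined by imposing \eqref{eq:mvf} on $\nabla_{\mathsf{R}}\wt$, and contracting $\nabla$ with $\mathsf{R}$ yields an explicit first-order system of ODEs for the generators of $\mathcal{O}_\mathsf{T}$.

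The heart of the argument is to integrate this system through the mirror map. I would introduce the flat coordinate $\tau$ as the ratio of the two periods of the holomorphic form of the mirror curve, set $q = e^{2\pi i\tau}$, and expand each generator of $\mathcal{O}_\mathsf{T}$ as a $q$-series. The claim is that these series are precisely the standard generators of $\widetilde M_*(\Gamma_0(3))$, namely a weight-one form attached to local $\mathbb{P}^2$ (realized by the cubic Borwein theta series), a second modular generator, and the quasi-modular Eisenstein series $E_2$, and that under this identification $\mathsf{R}$ acts as $\partial_\tau$, reproducing the $\Gamma_0(3)$-analogue of the Ramanujan relations. Verifying that the resulting map $\phi:\mathcal{O}_\mathsf{T}\to\widetilde M_*(\Gamma_0(3))$ is a ring homomorphism reduces to comparing weights and leading $q$-coefficients, and verifying $\phi\circ\mathsf{R}=\partial_\tau\circ\phi$ reduces to matching the ODE system of the previous step against the known derivative relations on $\Gamma_0(3)$.

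The main obstacle is to prove that $\phi$ is an isomorphism \emph{onto} the full ring, not merely a map \emph{into} it. This has two components. First, one must show that $\tau$ is a genuine $\Gamma_0(3)$-modular coordinate; this follows from computing the monodromy of the mirror-curve family around its singular fibers and identifying the monodromy group with $\Gamma_0(3)$, which also certifies that the $q$-expansions are quasi-modular of the stated weights. Second, one needs a transcendence-degree and generator count: the functions produced by the modular vector field must exhaust the generators of $\widetilde M_*(\Gamma_0(3))$, so that no quasi-modular form is missed and no spurious algebraic relation is imposed. Surjectivity then follows because the image contains a generating set, while injectivity follows from the agreement of transcendence degrees established in the first step together with the distinctness of leading $q$-orders.
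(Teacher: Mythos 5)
Your route is genuinely different from the paper's, but as written it has one concrete gap: the dimension count. For local $\mathbb{P}^2$ the filtration and pairing constraints do \emph{not} cut the frame data down to two parameters. The Hodge filtration forces the frame-change matrix $S$ to be lower triangular, the weight filtration kills $s_{21}$ and $s_{31}$, and the pairing --- which by construction vanishes identically outside $W_3$, so that $\Phi_6=(0)$ is the zero matrix --- only imposes $s_{22}=s_{33}^{-1}$ on the $W_3$-block. This leaves \emph{four} free coordinates $(z,s_{11},s_{32},s_{33})$, where $s_{11}$ is the scaling of the holomorphic $3$-form, untouched by any constraint. So the naive count gives $\operatorname{tr.deg}\mathcal{O}_{\mathsf{T}}=4\neq 3$, your expected count fails, and with it your final step (injectivity from equality of transcendence degrees) and indeed the statement itself: finitely generated domains of different dimension cannot be isomorphic. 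The missing idea, which the paper supplies, is that $\nabla_{\mathsf{R}}\Omega$ has no $\Omega$-component, so $s_{11}$ is constant along the modular vector field; one may therefore consistently restrict to the slice $s_{11}=1$, and it is the ring of functions of this three-dimensional slice that the theorem concerns. With that normalization inserted, your counting and injectivity arguments go through.

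Beyond this, note that most of your analytic machinery is not needed for the theorem as stated. The paper's proof is purely algebraic: it computes $\mathsf{R}$ explicitly in the coordinates $(z,s_{32},s_{33})$, giving $\dot z=z s_{33}^2 Y_{111}^{-1}$, $\dot s_{32}=18 z s_{33}^3 Y_{111}^{-1}$, $\dot s_{33}=-s_{32}s_{33}^2$, and then performs the change of generators $\alpha=27z$, $A=\sqrt{-3}\,s_{33}$, $B=(1-\alpha)^{1/3}A$, $C=\alpha^{1/3}A$, $E=d\log(B^3C^3)(\mathsf{R})$, after which this ODE system becomes literally the defining relations of the ring $R_3$ of quasi-modular forms on $\Gamma_0(3)$ reviewed in the modular-forms section; since that section (following Maier) already identifies $R_3$ with $\widetilde{\mathcal{M}}(\Gamma_0(3))$ equipped with $\partial_\tau$, the isomorphism is immediate, with no mirror map, no $q$-expansions, and no monodromy computation. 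Your proposal re-derives exactly that prior analytic input from scratch (monodromy group $=\Gamma_0(3)$, identification of the $q$-series with the theta-type generators, holomorphicity at both cusps of $X_0(3)$); in fact your ``verification'' sub-step --- matching the ODE system against the known $\Gamma_0(3)$ derivative relations --- already \emph{is} the whole of the paper's proof, so everything else in your plan is extra. What the longer route buys is an explicit modular interpretation of the coordinates (the flat coordinate is $\tau$, $s_{33}$ is a weight-one form, the image of $z$ is a Hauptmodul), which the paper's algebraic argument leaves implicit.
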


For the mirror of local $\mathbb{F}_2$ we prove the following theorem, relating the differential ring of the family of non-compact CY threefold to the differential ring of the associated mirror curve:

\begin{thm}\label{thm:2}
The ring of regular functions $\mathcal{O}_{\mathsf{T}}$ for the mirror of local $\mathbb{F}_2$ equipped with the differential structures coming from modular vector fields $\mathsf{R}_1$ and $\mathsf{R}_2$ contains as a differential sub-ring the ring of quasi-modular forms on $\G_0(2)$ with the derivation $\pa_\tau$. 
\end{thm}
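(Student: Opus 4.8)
The plan is to realize the ring of quasi-modular forms on $\G_0(2)$ geometrically through the mirror curve attached to the local $\mathbb{F}_2$ geometry, and then to identify its generators and its differential structure inside $\mathcal{O}_{\mathsf{T}}$. The starting point is that a threefold of the form $uv + P(y_1,y_2)=0$ has its nontrivial middle cohomology controlled by the mirror curve $\mathcal{C}\colon P(y_1,y_2)=0$ in $(\mathbb{C}^*)^2$, which for local $\mathbb{F}_2$ is a punctured elliptic curve whose period ratio $\t$ is a Hauptmodul for $\G_0(2)$. Concretely, I would first establish that the mixed Hodge structure on $H^3(X,\mathbb{C})$ splits off, through the weight filtration $W_\bullet$, a weight-two graded piece isomorphic to $H^1$ of the compactified mirror curve $\overline{\mathcal{C}}$, so that a sub-variation of the full variation of mixed Hodge structure reproduces the variation of Hodge structure of a family of elliptic curves with $\G_0(2)$ level structure.

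Next, using the explicit construction of $\mathsf{T}$ and of the frame $\wt$ from Section~\ref{sectionenhanced}, I would compute the Gauss--Manin connection $\nabla$ and the two modular vector fields $\mathsf{R}_1,\mathsf{R}_2$ in coordinates on $\mathsf{T}$. The key step is to isolate, among the regular functions generating $\mathcal{O}_{\mathsf{T}}$, the three functions playing the role of $E_2$, a weight-two generator $M_2$, and a weight-four generator $M_4$ of the quasi-modular ring on $\G_0(2)$: these should be the period-normalized entries of $\wt$ attached to the weight-two graded piece together with the associated coupling appearing in the starred blocks of \eqref{eq:mvf}. I expect these to be polynomial combinations of the coordinates on $\mathsf{T}$, and I would verify that the subring they generate is closed under the derivation induced by a suitable linear combination of $\mathsf{R}_1$ and $\mathsf{R}_2$.

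The heart of the argument is to match this induced derivation with $\pa_\t$. For this I would show that, restricted to the sub-variation coming from $\overline{\mathcal{C}}$, the modular vector field acts exactly as the connection $\nabla_{\pa_\t}$ of the elliptic family, so that the $\G_0(2)$ analogues of the Ramanujan relations are reproduced term by term. Equivalently, I would exhibit the map sending the chosen generators of $\mathcal{O}_{\mathsf{T}}$ to $E_2,M_2,M_4$ and check that it intertwines the two derivations; injectivity, which follows from the algebraic independence of the Eisenstein generators together with the fact that the period ratio is a non-constant Hauptmodul, then yields the claimed inclusion of differential rings.

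The main obstacle I anticipate is twofold. First, because $H^3$ of a non-compact Calabi--Yau carries only a mixed (not pure) Hodge structure, the relevant elliptic sub-variation must be extracted through $W_\bullet$, and one must check that the residue/period pairing descends to $\operatorname{Gr}^W$ so that the fixed matrices $\Phi_i$ are compatible with the intersection form on $\overline{\mathcal{C}}$. Second, in contrast to the local $\mathbb{P}^2$ case of Theorem~\ref{thm:1}, the moduli space for local $\mathbb{F}_2$ is genuinely two-dimensional, so $\mathcal{O}_{\mathsf{T}}$ strictly contains the image of the quasi-modular ring; the delicate point is to pin down the single modular direction inside the two-parameter deformation and to confirm that the transverse direction does not feed back into the $\G_0(2)$ subring, which is precisely what obstructs an isomorphism and forces the weaker sub-ring statement.
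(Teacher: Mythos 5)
Your proposal is correct and follows essentially the same route as the paper: the paper's proof likewise realizes the $\Gamma_0(2)$ ring through the mirror-curve part of the weight filtration, exhibiting explicit generators $u,A,B,C,E\in\mathcal{O}_{\mathsf{T}}$ (a Hauptmodul-type combination $u$ of $(z_1,z_2)$, $A=\sqrt{2(1-4z_1)}\,s_{44}$, $B=(1-u)^{1/4}A$, $C=u^{1/4}A$, $E=\partial_{\tau_1}\log(B^4C^4)$) that satisfy the $R_2$ relations under $\partial_{\tau_1}$ and are annihilated by $\partial_{\tau_2}$ --- precisely your ``single modular direction plus transverse independence'' step. The only differences are presentational: the paper verifies the Ramanujan-type relations by direct computation rather than by abstractly matching $\mathsf{R}_1$ with the Gauss--Manin connection of the elliptic sub-family, it uses the weight-one generators $A,B,C$ of \cite{maier2} in place of $E_2,M_2,M_4$, and the relevant graded piece is $\operatorname{Gr}^W_3\cong H^1(\overline{\Sigma})$ (weight three, not two, in the paper's indexing).
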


Moreover, the Gauss-Manin Lie algebra is computed in both cases and the following theorem characterize its relation to the Gauss-Manin Lie algebra of the associated mirror curve:
\begin{thm}\label{thm:3}
There is an $\mathfrak{sl}_2(\mathbb{C})$ Lie subalgebra of the Gauss-Manin Lie algebra $\mathfrak{G}$ for mirrors of local $\mathbb{P}^2$ and local $\mathbb{F}_2$.
\end{thm}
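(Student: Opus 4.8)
The plan is to realise, inside the explicitly computed Gauss-Manin Lie algebra $\mathfrak{G}$, the standard $\mathfrak{sl}_2(\mathbb{C})$ that acts on a ring of quasi-modular forms, namely the triple generated by the weight-raising derivation $\partial_\tau$, the weight-grading (Euler) operator $W$, and the weight-lowering derivation dual to the quasi-modular generator $E_2$. By Theorem~\ref{thm:1} (resp. Theorem~\ref{thm:2}) such a ring sits inside $\mathcal{O}_{\mathsf{T}}$ as a differential subring, so these three derivations must have preimages in $\mathfrak{G}$; the task is to name them and check the bracket relations. I would set $\mathsf{e}$ to be the modular vector field $\mathsf{R}$ for local $\mathbb{P}^2$, respectively the modular vector field along the mirror-curve direction singled out in Theorem~\ref{thm:2} for local $\mathbb{F}_2$: by construction its contraction $\nabla_{\mathsf{e}}\wt = A\,\wt$ has the strictly block-super-diagonal shape \eqref{eq:mvf}, and $\mathsf{e}$ realises $\partial_\tau$. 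I would take $\mathsf{h} = \mathsf{R}_\mathfrak{g}$ for the block-diagonal element $\mathfrak{h}\in{\rm Lie}(\mathsf{G})$ implementing the weight grading, and $\mathsf{f} = \mathsf{R}_\mathfrak{g}$ for a strictly block-lower-triangular nilpotent $\mathfrak{n}\in{\rm Lie}(\mathsf{G})$, to be pinned down so that it reproduces the $E_2$-lowering derivation; note that $A$ (upper) and $\mathfrak{n}$ (lower) sit on opposite sides of the filtration, which is what will let their bracket produce a block-diagonal element.

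Next I would compute all brackets matrix-theoretically using the flatness of the Gauss-Manin connection, $\nabla_{[\mathsf{X},\mathsf{Y}]}\wt = [\nabla_{\mathsf{X}},\nabla_{\mathsf{Y}}]\wt$. For two vector fields coming from constant matrices this gives $[\mathsf{R}_{\mathfrak{g}},\mathsf{R}_{\mathfrak{g}'}] = -\mathsf{R}_{[\mathfrak{g},\mathfrak{g}']}$, so the relation between $\mathsf{h}$ and $\mathsf{f}$ reduces to the purely linear-algebraic identity $[\mathfrak{h},\mathfrak{n}] = \pm 2\mathfrak{n}$ in ${\rm Lie}(\mathsf{G})$, which I would verify on the chosen matrices (the anti-homomorphism $\mathfrak{g}\mapsto\mathsf{R}_\mathfrak{g}$ flips the sign, absorbed by normalising $\mathsf{h}$). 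The relation between $\mathsf{h}$ and $\mathsf{e}$ reduces to $\mathsf{h}(A) - [\mathfrak{h},A] = 2A$; here the extra term $\mathsf{h}(A)$, the grading operator applied entrywise to $A$, simply records that each nonzero entry of $A$ carries a definite weight, and the identity states that these weights are consistent with $\mathsf{e}$ raising weight by $2$.

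The crux is the remaining relation $[\mathsf{e},\mathsf{f}] = \mathsf{h}$, which by flatness is the matrix identity
\[
 [\mathfrak{n},A] - \mathsf{f}(A) = \mathfrak{h},
\]
where $\mathsf{f}(A)$ denotes $\mathsf{R}_\mathfrak{g}=\mathsf{f}$ applied to the entries of $A$. I expect this to be the main obstacle, for two reasons. First it is genuinely a computation on $\mathcal{O}_{\mathsf{T}}$ rather than a constant-matrix identity: the block-diagonal part comes from $[\mathfrak{n},A]$ and must land exactly on $\mathfrak{h}$, while the off-diagonal part forces $\mathsf{f}(A)$ to cancel against $[\mathfrak{n},A]$, so one needs the explicit action of $\mathsf{f}$ on the generators appearing in $A$, precisely where the defining equations of $\mathsf{T}$ and the Ramanujan-type form of the modular vector field computed in the preceding sections enter. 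Second, and more conceptually, in the non-compact setting $\wt$ carries a mixed Hodge structure, so $A,\mathfrak{h},\mathfrak{n}$ are block-structured along both the Hodge filtration $F^\bullet$ and the weight filtration $W_\bullet$; one must check that no weight-mixing obstruction survives and that the nilpotent $\mathfrak{n}$ can be chosen compatibly with the full filtration. This is the precise sense in which the $\mathfrak{sl}_2(\mathbb{C})$ is related only indirectly to the mirror curve: $\mathsf{f}$ is the lift of the curve's $E_2$-lowering derivation that respects the threefold's filtrations, and the displayed identity is exactly what certifies that such a lift exists and closes. Once it is verified in each of the two families, the Jacobi identity is automatic and the span of $\mathsf{e},\mathsf{h},\mathsf{f}$ is the claimed $\mathfrak{sl}_2(\mathbb{C})\subset\mathfrak{G}$.
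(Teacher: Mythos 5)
Your proposal is correct, and it lands on exactly the triple the paper itself uses: for local $\mathbb{P}^2$ the generators $\mathsf{e}=\mathsf{R}$, $\mathsf{h}=\mathsf{R}_{\mathfrak{g}_{33}}$, $\mathsf{f}=\mathsf{R}_{\mathfrak{g}_{32}}$, and for local $\mathbb{F}_2$ the generators $\mathsf{R}_1$, $\mathsf{R}_{\mathfrak{g}_{44}}$, $\mathsf{R}_{\mathfrak{g}_{43}}$ (your ``block-diagonal weight element'' and ``strictly lower nilpotent'' are precisely $\mathfrak{g}_{44}$ and $\mathfrak{g}_{43}$; the remaining generators $\mathfrak{g}_{22}$, $\mathfrak{g}_{23}$ of ${\rm Lie}(\mathsf{G})$ feed into the ideal $\mathfrak{L}_V$ and play no role in the subalgebra). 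Where you genuinely differ is the verification. The paper solves the linear systems $\nabla_{\mathsf{R}_{\mathfrak{g}}}\wt=\mathfrak{g}\,\wt$ to get the vector fields in coordinates and then evaluates every Lie bracket directly as a bracket of explicit vector fields on $\mathsf{T}$; you instead convert each bracket into a matrix identity via flatness, $\nabla_{[X,Y]}\wt=[\nabla_X,\nabla_Y]\wt$, combined with the uniqueness of $\mathsf{R}_{\mathfrak{g}}$. This buys something real: $[\mathsf{h},\mathsf{f}]=-\mathsf{R}_{[\mathfrak{h},\mathfrak{n}]}$ becomes constant-matrix algebra, and the other two relations become structured identities such as $[\mathfrak{n},A]-\mathsf{f}(A)=\mathfrak{h}$, where $A$ is the matrix with $\nabla_{\mathsf{e}}\wt=A\,\wt$. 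Your ``crux'' identity does hold and is even cleaner than you anticipate: for local $\mathbb{P}^2$ the only nonconstant entry of $A$ is the $(1,2)$-entry $s_{33}^3/Y_{111}$, which is annihilated by $\mathsf{f}=s_{33}\,\partial/\partial s_{32}$, while $[\mathfrak{n},A]=\operatorname{diag}(0,-1,1)=\mathfrak{h}$ exactly; similarly $\mathsf{h}(A)-[\mathfrak{h},A]=2A$ because that entry has weight $3$ under $\mathsf{h}$ and weight $1$ under ${\rm ad}\,\mathfrak{h}$. Note, however, that these evaluations still require the explicit coordinate expressions of $\mathsf{f}$, $\mathsf{h}$ and of the entries of $A$ --- the same data the paper computes --- so the saving is organizational and conceptual rather than computational. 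Two small caveats: your opening inference, that Theorems \ref{thm:1} and \ref{thm:2} force the three derivations of quasi-modular forms to have preimages in $\mathfrak{G}$, is not valid as stated (a differential subring closed under $\partial_\tau$ says nothing about lifting $W$ and $\mathfrak{d}$), but you use it only as motivation; and the ``weight-mixing obstruction'' you worry about is already excluded by the setup, since ${\rm Lie}(\mathsf{G})$ is by definition cut out by both filtrations and the graded pairing, so every admissible $\mathfrak{n}$ is automatically compatible with the mixed Hodge structure.
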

\subsection*{Organization of the paper}
This paper is organized as follows: In the second section we briefly review quasi-modular forms and construct the algebra of derivations on it. In the third section we review the theory of enhanced CY varieties and their moduli spaces. We recall in detail the construction for elliptic curves.  In the fourth section  we review the construction of variation of mixed Hodge structure for families of non-compact CY threefolds and propose a definition of enhanced CY varieties for non-compact situations, extending the GMCD program to the case of mixed Hodge structures. At the end of the article, we apply the construction to two families, mirror to non-compact CY threefolds and prove Theorems \ref{thm:1}, \ref{thm:2} and \ref{thm:3}.

\subsection*{Acknowledgements}
The authors would like to thank Florian Beck for helpful discussions. M.V. would like to thank IMPA, where part of this work was carried out, for hospitality. This research is supported by DFG Emmy-Noether grant on ”Building blocks of physical theories from the geometry of quantization and BPS states”, number AL 1407/2-1.

\section{Modular forms and differential operators}\label{sec:mf}
\subsection{Basic definitions}
In this section we review some basic definitions of quasi--modular forms and their associated algebra of derivations, see \cite{kaneko1995,Fred:2005course,zagier2008elliptic} for more background.\\
Let $\mathbb{H} = \{ \tau \in \mathbb{C} \: | \: \operatorname{Im}\tau >0 \}$ be the upper half plane and $\overline{\mathbb{H}}  = \mathbb{H} \cup \mathbb{Q} \cup \{i \infty\} $ be the extended upper half plane, which is closed under the natural ${\rm SL}_2(\mathbb{Z})$ action 
\[\t \rightarrow \gamma.\t=\frac{a\t +b}{c \t + d}, \qquad \text{for} \qquad \begin{pmatrix} a & b \\ c& d \end{pmatrix}=\gamma \in {\rm SL}_2(\mathbb{Z}).\] Let $N$ be a positive integer, then the \textit{principal congruence subgroup of level N} is
\[ \G(N) =  \left\{ \begin{pmatrix}
a& b\\
c& d
\end{pmatrix} \in {\rm SL}_2( \mathbb{Z}) : \begin{pmatrix} a & b \\ c& d \end{pmatrix} = \begin{pmatrix} 1 & 0 \\
0 & 1 \end{pmatrix}\mod N \right\}. \]
A subgroup $\G$ of ${\rm SL}_2( \mathbb{Z})$ is called  a \textit{congruence} subgroup if  $\G(N) \subseteq \G $. For example, 
\be\label{eq:csg} \G_0 (N) =  \left\{ \begin{pmatrix}
a& b\\
c& d
\end{pmatrix} \in {\rm SL}_2(\mathbb{Z}) : \begin{pmatrix} a & b \\ c& d \end{pmatrix} = \begin{pmatrix} * & * \\
0 & * \end{pmatrix}\mod N \right\}. \ee
\begin{dfn}
A \textit{multiplier system} is a map $v: {\rm SL}_2( \mathbb{Z}) \rightarrow \mathbb{C}^\times$, such that if  we denote 
\[ j_{v,k} (\gamma, z) : = v(\gamma) (c z + d)^k \]
for $\gamma \in {\rm SL}_2( \mathbb{Z})$, then
\[  j_{v,k} (\gamma_1 \gamma_2, z) = j_{v,k} (\gamma_1, z) j_{v,k} (\gamma_2, z) \] 
and 
\[ j_{v,k} (-\gamma, z) = (-1)^k j_{v,k} (\gamma, z) . \]
\end{dfn}
\begin{dfn}
  A \textit{modular form of weight} $k \in \mathbb{Z}$ with respect to a congruence subgroup $\G$ and a multiplier system $v$ is a function $f: {\mathbb{H}} \rightarrow \mathbb{P}^1 $  such that the following conditions hold:
\begin{enumerate}
\item $f(\g .\t) = v(\gamma) (c \t +d)^k  f(\t),\quad \forall \g = \begin{pmatrix}
a&b\\c&d
\end{pmatrix} \in \G$, where $(c \t +d)^k$ is called the automorphy factor,
\item $f$ is holomorphic on $\mathbb{H}$,
\item $f$ is holomorphic at the cusps in the sense that the function
\[ (c \t + d)^{-k} f(\g .\t) \]
is holomorphic at $\t = i \infty$ for any $\g \in \rm{PSL}_2( \mathbb{Z})= {\rm SL}_2(\mathbb{Z})/ \{\pm {\rm I} \} $. \end{enumerate}
\end{dfn}
 The ring of modular forms for $\G$ and $v$ is denoted by $\mathcal{M}(\G, v)$. It is naturally graded by the weight and finitely generated. If the multiplier system $v$ is trivial we simply omit it from notation. For example, 
\[
    \mathcal{M}\left({\rm SL}_2(\mathbb{Z})\right) = \bigoplus_{k\in2\mathbb{Z}} \mathcal{M}_k ({\rm SL}_2(\mathbb{Z})) \cong\mathbb{C}[E_4,E_6]
\]
is finitely generated by the Eisenstein series $E_4$ and $E_6$ defined by
\[
E_{2k}(\tau)=1+\frac{2}{\zeta(1-2k)}\sum_{n=1}^\infty \frac{n^{2k-1}q^n}{1-q^n},\quad {\rm for}\quad q=e^{2 \pi i \tau},
\]
where $\zeta$ is the Riemann $\zeta$-function. 

\begin{dfn}A \textit{quasi-modular form} on $\G$ of weight $k\in\mathbb{Z}$ and depth $p\in\mathbb{N}$ is a holomorphic function $f:  {\mathbb{H}} \rightarrow \mathbb{P}^1$, holomorphic at the cusps, such that there exist $p+1$ holomorphic functions $f_0, \dots, f_p$, such that 
\[ (c \t + d )^{-k} f (\g. \t) = \sum_{i=0}^p f_i(\t) \left( \frac{c}{c \t + d} \right)^i,\quad \forall \gamma \in \Gamma. \]
If a multiplier system is involved, one just has to multiply the right-hand side by $v(\g)$. 
\end{dfn}
\begin{ex}
The Eisenstein series $E_2$ is a quasi-modular form of depth 1 and weight 2 for $\G = {\rm SL}_2(\mathbb{Z})$
\[ E_2 (\g. \t) = (c \t + d)^2 E_2 ( \t) + \frac{12 c }{2 \pi i} (c \t + d). \]
\end{ex}
\noindent The ring of quasi-modular forms for ${\rm SL}_2(\mathbb{Z})$  is finitely generated
\[ \widetilde{\mathcal{M}}({\rm SL}_2(\mathbb{Z})) \cong \mathbb{C}[ E_2, E_4, E_6]. \] 
\subsection{$\mathfrak{sl}_2(\mathbb{C})$ Lie algebra of derivations} \label{sl2csection}
Unlike the ring of modular forms, the ring of quasi-modular forms is closed under the action of $\partial_\tau \coloneqq \frac{1}{2 \pi i} \frac{\pa}{\pa \t}$. For example, in the case of $\G = {\rm SL}_2(\mathbb{Z})$  we have the following relations
\begin{equation}
    \partial_\tau E_2=\frac{E_2^2-E_4}{12},\quad \partial_\tau E_4=\frac{E_4E_2-E_6}{3},\quad \partial_\tau E_6=\frac{E_6E_2-E_4^2}{2}, \label{ramanujan}
\end{equation}
called the Ramanujan relations.

In addition to the derivation $\pa_\t$ we can define two more derivations on $\widetilde{\mathcal{M}}({\rm SL}_2(\mathbb{Z}))$, as it was done in \cite{zagier2008elliptic, zagier2016partitions}.  Due to the fact that $\widetilde{\mathcal{M}}({\rm SL}_2(\mathbb{Z})) \cong \mathbb{C}[ E_2, E_4, E_6]$, we can write any quasi-modular form $f$ of weight $k$ as a polynomial in $E_2$
\[ f = P_f( E_2) = \sum_i a_n E_2^n,  \]
with $a_n\in\mathcal{M}({\rm SL}_2(\mathbb{Z}))$ of weight $k-2n$. Define 
\[ \mathfrak{d} f = - 12 P_f'(E_2). \] 
The other operator  simply returns the weight of the quasi-modular form
\[ W(f) = k f. \]


\begin{prop}
The operators $\pa_\t$, $\mathfrak{d}$ and $W$ defined above form  an $\mathfrak{sl}_2(\mathbb{C})$ Lie algebra 
\be\label{eq:sl2c} [\: W, \pa_\t \:] = 2 \pa_\t, \qquad [\:W, \mathfrak{d} \:] = - 2 \mathfrak{d}, \qquad [\:\pa_\t,\mathfrak{d} \: ] = W.  \ee
It is possible to construct this algebra of derivations for the other congruence subgroups as well (see section 5.3 in \cite{zagier2008elliptic}). 
\end{prop}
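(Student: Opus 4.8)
The plan is to prove the three commutation relations in \eqref{eq:sl2c} directly from the definitions of the three operators, working in the explicit polynomial presentation $\widetilde{\mathcal{M}}({\rm SL}_2(\mathbb{Z})) \cong \mathbb{C}[E_2,E_4,E_6]$. Since every operator is a derivation (or, in the case of $W$, at least additive and multiplicative on the graded pieces), it suffices to verify each bracket on the ring generators $E_2,E_4,E_6$ and then extend by the derivation/Leibniz property. First I would record that $W$ is the grading operator: on a homogeneous $f$ of weight $k$ we have $W(f)=kf$, and since the weights of $E_2,E_4,E_6$ are $2,4,6$, the operator $W$ acts as the Euler-type derivation $W = 2E_2\partial_{E_2}+4E_4\partial_{E_4}+6E_6\partial_{E_6}$.

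The relation $[W,\partial_\tau]=2\partial_\tau$ follows because $\partial_\tau$ raises weight by $2$: for homogeneous $f$ of weight $k$, the Ramanujan relations \eqref{ramanujan} show $\partial_\tau f$ is homogeneous of weight $k+2$, so $W(\partial_\tau f)=(k+2)\partial_\tau f$ while $\partial_\tau(W f)=k\,\partial_\tau f$, giving the difference $2\partial_\tau f$. Symmetrically, $[W,\mathfrak{d}]=-2\mathfrak{d}$ follows from the fact that $\mathfrak{d}$ lowers weight by $2$: if $f=P_f(E_2)=\sum_n a_n E_2^n$ with $a_n$ of weight $k-2n$, then $\mathfrak{d}f=-12 P_f'(E_2)=-12\sum_n n\,a_n E_2^{n-1}$, and each surviving term $a_n E_2^{n-1}$ has weight $(k-2n)+2(n-1)=k-2$, so $\mathfrak{d}f$ is homogeneous of weight $k-2$; the same Euler-operator bookkeeping as before yields the bracket. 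To make this fully rigorous I would note that $\mathfrak{d}$ is nothing but $-12\,\partial_{E_2}$ in the chosen coordinates, i.e.\ differentiation treating $E_4,E_6$ as constants, which makes both its derivation property and its weight-shifting property transparent.

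The substantive relation is $[\partial_\tau,\mathfrak{d}]=W$, and this is where I expect the real content to lie. The plan is to verify it on the three generators. Since $\mathfrak{d}=-12\partial_{E_2}$ kills $E_4$ and $E_6$ (they are $E_2$-independent), on those generators one only needs $\mathfrak{d}\partial_\tau E_4$ and $\mathfrak{d}\partial_\tau E_6$ from \eqref{ramanujan}: for instance $\partial_\tau E_4=\tfrac{1}{3}(E_4E_2-E_6)$, so $\mathfrak{d}\partial_\tau E_4=-12\cdot\tfrac13 E_4=-4E_4=-W(E_4)/\ldots$; tracking the constants, $[\partial_\tau,\mathfrak{d}]E_4=-\mathfrak{d}\partial_\tau E_4 = 4E_4 = W(E_4)$, and similarly $[\partial_\tau,\mathfrak{d}]E_6=W(E_6)$ using $\partial_\tau E_6=\tfrac12(E_6E_2-E_4^2)$. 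On $E_2$ one must use both terms: $\mathfrak{d}E_2=-12$ is a constant, so $\partial_\tau\mathfrak{d}E_2=0$, while $\mathfrak{d}\partial_\tau E_2=\mathfrak{d}\tfrac{1}{12}(E_2^2-E_4)=-12\cdot\tfrac{1}{12}\cdot 2E_2=-2E_2$, giving $[\partial_\tau,\mathfrak{d}]E_2=2E_2=W(E_2)$. The main obstacle is therefore bookkeeping the numerical constants in the Ramanujan relations consistently, and—more importantly—justifying that checking a bracket of two derivations on generators suffices. For this I would invoke the standard fact that a commutator of two derivations of a commutative algebra is again a derivation, so both sides of $[\partial_\tau,\mathfrak{d}]=W$ are derivations of $\mathbb{C}[E_2,E_4,E_6]$, and two derivations that agree on a generating set agree everywhere; this reduces the identity to the three generator computations above. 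The final remark about extending the construction to other congruence subgroups would be handled by citing section~5.3 of \cite{zagier2008elliptic}, since the same polynomial-presentation argument applies once one has an analogous depth-one generator playing the role of $E_2$.
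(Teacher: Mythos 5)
Your proof is correct, and it supplies something the paper itself does not: the paper states this proposition \emph{without proof}, deferring to section 5.3 of \cite{zagier2008elliptic}, so there is no internal argument to compare against step by step. Your route is the natural direct one and is complete: identifying $\mathfrak{d}$ with $-12\,\partial_{E_2}$ on the free polynomial ring $\mathbb{C}[E_2,E_4,E_6]$, observing that $W$ is the Euler (grading) derivation while $\partial_\tau$ and $\mathfrak{d}$ shift weight by $+2$ and $-2$ respectively (which yields the two brackets with $W$ on homogeneous elements, hence everywhere by linearity), and then reducing $[\partial_\tau,\mathfrak{d}]=W$ to a check on generators via the standard facts that a commutator of derivations is a derivation and that two derivations agreeing on a generating set of a commutative algebra agree everywhere. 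The three generator computations are right: $[\partial_\tau,\mathfrak{d}]E_2=2E_2$, $[\partial_\tau,\mathfrak{d}]E_4=4E_4$, $[\partial_\tau,\mathfrak{d}]E_6=6E_6$, using $\mathfrak{d}E_2=-12$, $\mathfrak{d}E_4=\mathfrak{d}E_6=0$ and the Ramanujan relations \eqref{ramanujan}. Two cosmetic points: the fragment ``$-W(E_4)/\ldots$'' in your $E_4$ computation is garbled (though the conclusion you draw from it is correct), and you should say explicitly that the algebraic independence of $E_2,E_4,E_6$ is what makes $P_f$ unique and hence $\mathfrak{d}=-12\,\partial_{E_2}$ well defined; neither affects correctness. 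What your approach buys over the paper's citation is a self-contained, elementary verification; what the citation buys is contact with Zagier's structural viewpoint (the correspondence with almost holomorphic modular forms), which explains \emph{why} an $\mathfrak{sl}_2(\mathbb{C})$ action exists rather than verifying it by computation.
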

\begin{rem}
There exists a similar Lie algebra of derivations in the case of Jacobi modular forms, see \cite{oberdieck2019gromov}. 
\end{rem}

\subsection{Differential rings of quasi-modular forms}
\label{sec:app} 
In this section we review the differential rings of quasi-modular forms described in \cite{maier2} (see also \cite{alim2014special}) for the congruence subgroups $\Gamma_0(N)$  with $N=1^*$,\footnote{Here $1^*$ corresponds to $\G_0(1)^*$ -- the unique normal index 2 subgroup of $\G(1)$.} $2,\:3$. For suitable multiplier systems on $\Gamma_0(N)$ (see loc.cit.) consider the following weight one modular forms:
\begin{table}[h]\label{tab:tab3}
	\centering
	\begin{tabular}{c|c  c  c  }
		$N$&$A$&$B$&$C$ \\
		$1^*$&$E_4(\tau)^{1/4}$&$\left(\frac{E_4(\tau)^{3/2}+E_6(\tau)}{2}\right)^{1/6}$&$\left(\frac{E_4(\tau)^{3/2}-E_6(\tau)}{2}\right)^{1/6}$ \\
		2&$\frac{(64\eta(2\tau)^{24}+\eta(\tau)^{24})^{1/4}}{\eta(\tau)^2\eta(2\tau)^2}$&$\frac{\eta(\tau)^4}{\eta(2\tau)^2}$&$2^{3/2}\frac{\eta(2\tau)^4}{\eta(\tau)^2}$ \\
		3&$\frac{(27\eta(3\tau)^{12}+\eta(\tau)^{12})^{1/3}}{\eta(\tau)\eta(3\tau)}$&$\frac{\eta(\tau)^3}{\eta(3\tau)}$&$3\frac{\eta(3\tau)^3}{\eta(\tau)}$ 
	\end{tabular}
\end{table}\\
\noindent where $\eta(\tau)$ denotes the Dedekind $\eta$-function
\[
    \eta(\tau)=\left(\frac{E_4(\tau)^3-E_6(\tau)^2}{1728}\right)^{\frac{1}{24}}.
\]
Although some of these functions are not single-valued, their squares are. Moreover, the modular forms $B$ and $C$ can be written in terms of $A$ and a weight $0$ modular form $\a$ - the Hauptmodul. The following holds: 
\[ B = (1-\a)^{1/r} A , \quad C = \a^{1/r} A ,\] 
where $r=6$ for $N=1^*$, $r=4$ for $N=2$ and $r=3$ for $N=3$.

Define also the analogue of the Eisenstein series $E_2$ as
\begin{equation}\label{eq:ee}
E=\partial_\tau\log B^rC^r. 
\end{equation}

\begin{dfn}
By a differential ring of quasi-modular forms on $\Gamma_0(N)$ we would mean a polynomial ring modulo the above relations: 
\[ R_N = \mathbb{C}[ \a, A, B, C,E] / (B^r - (1- \a) A^r, C^r - \a A^r) \] 
with differential relations given by 
\begin{equation}\label{eq:ringmf}
\begin{split}
\pa_\tau \a& = \a( 1- \a) A^2,\\
\partial_\tau A&=\frac{1}{2r}A\left(E+\frac{C^r-B^r}{A^{r-2}}\right),\\
\partial_\tau B&=\frac{1}{2r}B(E-A^2),\\
\partial_\tau C&=\frac{1}{2r}C(E+A^2),\\
\partial_\tau E&=\frac{1}{2r}(E^2-A^4).
\end{split}
\end{equation}
\end{dfn}
The other derivations on the ring are given by
\begin{equation}
    H: f\mapsto kf,
\end{equation}
and
\begin{equation}
    F: f\mapsto -2r\partial_{E}P_f(E),
\end{equation}
for a weight $k$ quasi-modular form $f\in\widetilde{\mathcal{M}}(\Gamma_0(N))$, where $P_f(E)$ is viewed as a formal expansion of $f$ in $E$, as in the section \ref{sec:mf}. The Lie algebra of derivations is $\mathfrak{sl}_2(\mathbb{C})$.
\section{Moduli space of enhanced CY varieties}\label{sec:cpct} 

\subsection{Hodge structures}\label{sec:32}
\begin{dfn}
A \textit{pure $\mathbb{Z}$-Hodge structure} of weight $n\in\mathbb{Z}$ is a pair $(H_{\mathbb{Z}}, F^\bullet)$, where  $H_{\mathbb{Z}}$ is a lattice, i.~e.~a finitely generated free abelian group, and $F^\bullet$ is a decreasing (Hodge) filtration on $H_\mathbb{C} = H_\mathbb{Z} \otimes_\mathbb{Z} \mathbb{C}$
\[ \label{eq:filt} 0 \subset F^{n} \subset \dots \subset F^{0} = H_\mathbb{C}, \]
such that $F^p \oplus \overline{F^{n-p+1}} \simeq H_{\mathbb{C}}$. 
\end{dfn}
\begin{dfn}
A \textit{polarization} of a $\mathbb{Z}$-Hodge structure $( H_{\mathbb{Z}}, F^\bullet)$ is a bilinear form 
\[ \langle -, - \rangle: H_\mathbb{Z} \otimes H_\mathbb{Z} \rightarrow \mathbb{Z},  \] 
such that:
\begin{itemize}
    \item It is symmetric if $n$ is even and skew-symmetric if $n$ is odd,
    \item The orthogonal complement of $F^p$ is $F^{n-p+1}$,
    \item For $\a, \b\in H_\mathbb{C}$ the hermitian form $\langle C \a , \overline{\b}\rangle$ is positive definite, where $C$ is the so-called Weil operator which  acts on $H^{p,q}\coloneqq F^p \cap \overline{F^q}$  by multiplication with $i^{p-q}$.
\end{itemize}
A Hodge structure is called \textit{polarizable} if it admits a polarization.
\end{dfn}

\begin{dfn}
	A \textit{variation of  Hodge structure} on a complex variety $\mathsf{B}$ is a pair $(\mathcal{H}_\mathbb{Z},\mathcal{F}^\bullet)$ such that
	\begin{itemize}
		\item $\mathcal{H}_\mathbb{Z}$ is a locally constant sheaf of finitely generated $\mathbb{Z}$-modules on $\mathsf{B}$,
		\item $\mathcal{F}^\bullet$ is a finite decreasing  filtration on $\mathcal{H}=\mathcal{H}_\mathbb{Z}\otimes_\mathbb{Z}\mathcal{O}_\mathsf{B}$ by holomorphic subbundles,
	\end{itemize}
	such that
	\begin{itemize}
		\item For each $b\in \mathsf{B}$ the fibers $\mathcal{F}^\bullet_b$  form a Hodge filtration on  $(\mathcal{H}_{\mathbb{Z}})_b \otimes_{\mathbb{Z}} \mathbb{C}$, 
		\item The Gauss-Manin conection $\nabla: \mathcal{H} \rightarrow \mathcal{H}\otimes \W^1_\mathsf{B}$  defined by 
		\[ \nabla(s\otimes f)=s\otimes df,\]
		for $s\in \mathcal{H}_\mathbb{Z}$ and $f\in \mathcal{O}_\mathsf{B}$, satisfies the Griffiths' transversality condition
		\[
\nabla \mathcal{F}^p\subset \mathcal{F}^{p-1}\otimes_{\mathcal{O}_{\mathsf{B}}}\Omega^1_{\mathsf{B}}. \] 
	\end{itemize}
	A variation of Hodge structure is said to be \textit{polarizable} if there exists a $\nabla$-flat bilinear pairing 
		\[\langle-,-\rangle: \mathcal{H}_\mathbb{Z}\otimes \mathcal{H}_\mathbb{Z} \rightarrow \mathbb{Z},\] 
		such that it polarizes $(\mathcal{H}_{\mathbb{Z}})_b$  for each  $b \in \mathsf{B}$. 
\end{dfn}
\begin{dfn}
A \textit{Calabi-Yau variety} is a non-singular quasi-projective variety $X$, such that  the canonical sheaf $ \w_X$ is trivial and $H^i(X, \mathcal{O}_X) =0$ for $0<i< \dim X$.
\end{dfn}

\begin{ex} \label{ex:ccy}
Let $\pi: \mathcal{X}\rightarrow \mathsf{B}$ be a family of compact CY varieties of dimension $n$. The middle cohomology of a  general fibre $X_b=\pi^{-1}(b),~b\in\mathsf{B}$ carries a Hodge structure $(H^n(X_b,\mathbb{Z}),F^\bullet(X_b))$, naturally polarized by
\begin{equation}\label{eq:pairp}
    \langle\alpha,\beta\rangle=(-1)^{\frac{n(n-1)}{2}}\int_{X_b} \alpha\wedge\beta\qquad \alpha,\beta\in H^n(X_b,\mathbb{Z}),
\end{equation}
and there is a polarized variation of Hodge structure $(\mathcal{H}^n_\mathbb{Z}(\mathcal{X}/\mathsf{B}),\mathcal{F}^\bullet)$, where $\mathcal{H}^n_\mathbb{Z}(\mathcal{X}/\mathsf{B})=R^n\pi_*\mathbb{Z}$, and $\mathcal{F}^\bullet$ are unique vector subbundles of $\mathcal{H}^n(\mathcal{X}/\mathsf{B}) = \mathcal{H}^n_\mathbb{Z}(\mathcal{X}/\mathsf{B}) \otimes_\mathbb{Z} \mathcal{O}_\mathsf{B}$ such that $\mathcal{F}^\bullet_b=F^\bullet(X_b)$. The variation of Hodge structure is naturally polarized by \eqref{eq:pairp}.
\end{ex}

\subsection{Enhanced families} \label{sectionenhanced}
\begin{dfn} \label{dfn:enhco}
An \textit{enhanced compact CY variety} of dimension $n$ is a pair $(X, \w)$, where $X$ is a compact CY variety of dimension $n$ and
 \[ \w = ( \w_1,\dots ,  \w_i,\dots, \w_{{b_n}})^{\rm tr}, \quad b_n =\dim H^n(X, \mathbb{C}), \]
is  a basis of $H^n(X,\mathbb{C})$ compatible with the Hodge filtration, i.~e. 
\[ (\w_1, \ldots, \w_{\dim F^k}) \]
spans a basis of ${F}^{k}$, and such that 
\[ \langle\omega,\omega\rangle=\Phi, \]
for a constant non-degenerate matrix $\Phi$. We call a basis $\w$ satisfying the above conditions \textit{compatible}. 
\end{dfn}


In this paper we focus on the local structure of the moduli space $\mathsf{T}$ rather than its global structure which requires a more careful study. We will denote by $\mathsf{X} / \mathsf{T}$ the associated family of enhanced CY varieties over it. For construction of the moduli space $\mathsf{T}$ in some specific examples we refer for example to \cite{movasati2020modular}. Consider a family $\mathcal{X}/\mathsf{B}$ of smooth CY varieties as in Example \ref{ex:ccy}. Around any point $b\in \mathsf{B}$ we can choose a Zariski open neighborhood $V_b \subset \mathsf{B}$ and a global frame $\w_b$ of the vector bundle $\mathcal{H}^n(\mathcal{X}/\mathsf{B}) \big|_{V_b}$, such that at every point $b'$ of $V_b$ the basis $\w_b(b')$ is compatible. All other frames with this property can be constructed by a transformation of $\omega_b$ by a non-singular lower block-triangular $b_n\times b_n$ matrix 
\be S = \begin{pmatrix}\label{eq:ss}
 *_{f_n,f_n} & 0_{f_{n},f_{n-1}} & 0_{f_{n},f_{n-2}}& \dots &0_{f_{n},f_{0}}\\
 *_{f_{n-1},f_{n}} & *_{f_{n-1},f_{n-1}} & 0_{f_{n-1},f_{n-2}}& \dots &0_{f_{n-1},f_{0}}\\
 \vdots&\vdots&\vdots&\vdots&\vdots\\
 *_{f_{1},f_{n}} & *_{f_{1},f_{n-1}} & *_{f_{1},f_{n-2}}& \dots &0_{f_{1},f_{0}}\\
 *_{f_{0},f_{n}} & *_{f_{0},f_{n-1}} & *_{f_{0},f_{n-2}}& \dots &*_{f_{0},f_{0}}
\end{pmatrix} ,\ee
where $f_k = \rk \mathcal{F}^k / \mathcal{F}^{k+1}$ and  $*_{a,b}$ is an $a \times b$ matrix, satisfying 
\be\label{paircond} S \Phi S^{\rm tr} = \Phi.  \ee
We denote the corresponding frame by $\wt = S \w_b$. Define  $U$ to be the underlying complex variety of 
\[ \operatorname{Spec} \left( \mathbb{C}\left[ s_{ij}^{ind}, \frac{1}{\det S}\right]\right),  \] 
where $s_{ij}^{ind}$ are the algebraically independent entries of the matrix $S$, then a patch $\tilde{\mathsf{T}}$ of the moduli space $\mathsf{T}$ can be constructed as the fiber product 
\[ \tilde{\mathsf{T}} = V_b \times_\mathbb{C} U. \]
Since we are going to work only with structures that do not require a construction of the global moduli space $\mathsf{T}$, we choose an analytic neighborhood $V_0$ inside of $V_{b_{0}}$ and an analytic neighborhood $U_0 \subseteq U$ of the point $S = \Id$.  Further in this paper we abuse notation and by $\mathsf{T}$ we mean \[ \mathsf{T} = V_0 \times_\mathbb{C} U_0. \]

 On the algebraic de Rham cohomology $\mathcal{H}^n(\mathsf{X} / \mathsf{T})$ there exists an algebraic Gauss-Manin connection \cite{Katz:1968} 
\be \nabla: \mathcal{H}^n(\mathsf{X} / \mathsf{T}) \rightarrow \mathcal{H}^n(\mathsf{X} / \mathsf{T}) \otimes_{\mathcal{O}_\mathsf{T}} \Omega^1_{\mathsf{T}}, \label{gmc} \ee 
where $\mathcal{O}_\mathsf{T}$ is the ring of regular functions and $\Omega^1_{\mathsf{T}}$ is an $\mathcal{O}_\mathsf{T}$-module of differential 1-forms on $\mathsf{T}$. Using the local description of $\mathsf{T}$, we can calculate the Gauss-Manin connection \eqref{gmc} from the Gauss-Manin connection on $\mathcal{H}^n(\mathcal{X}/\mathsf{B})$. Let 
\[\nabla \w_b = A \w_b,\]
with an $\W_{\mathsf{B}}^1$-valued matrix $A$, then in the basis $\wt$ we have 
\be\label{eq:mvfco} \nabla \wt = (d S + S A) S^{-1} \wt. \ee

\subsection{Gauss-Manin Lie algebra} \label{GMA}
It was shown in \cite{Movasati:20113} that in the case of elliptic curves  the ring of regular functions $\mathcal{O}_{\mathsf{T}}$ equipped with a differential structure coming from a certain vector field $\mathsf{R}$ on $\mathsf{T}$ is isomorphic, as a differential ring, to the ring of quasi-modular forms of \cite{kaneko1995} with derivation $\pa_{\tau}$. In other words, the Ramanujan differential equations \eqref{ramanujan}  are equivalent to the existence of a (modular) vector field $\mathsf{R}$ on $\mathsf{T}$. There exists a similar algebraic structure on the rings of regular functions $\mathcal{O}_\mathsf{T}$ for more general varieties. It can be formalized by introducing special (modular) vector fields on $\mathsf{T}$.

\begin{dfn} \label{defmf}
A \textit{modular vector field} $\mathsf{R}$ is a rational vector field on $\mathsf{T}$ such that
\be
\nabla_{\mathsf{R}} \wt = \begin{pmatrix}
 0_{f_n,f_n} & *_{f_n,f_{n-1}} & 0_{f_n,f_{n-2}}& \dots &0_{f_n,f_0}\\
0_{f_{n-1},f_n} & 0_{f_{n-1},f_{n-1}} & *_{f_{n-1},f_{n-2}}& \dots &0_{f_{n-1},f_0}\\
\vdots&\vdots&\vdots&\vdots&\vdots\\
0_{f_1,f_n} & 0_{f_1,f_{n-1}}& 0_{f_1,f_{n-2}}&\dots&*_{f_1,f_0}\\
0_{f_0,f_n} & 0_{f_0,f_{n-1}} & 0_{f_0,f_{n-2}}& \dots &0_{f_0,f_0}
\end{pmatrix} \wt
,\ee
where $f_k = \rk \mathcal{F}^k / \mathcal{F}^{k+1}$ and $*_{a,b}$ is an $a \times b$ matrix with entries in $\mathcal{O}_\mathsf{T}$. 
\end{dfn}

Moreover, it is possible to define a set of vector fields, corresponding to derivations on $\mathcal{O}_\mathsf{T}$, generalizing \eqref{eq:sl2c}.

Let $\mathsf{G}$ be the algebraic group  consisting of all matrices $S$ defined above, 
    \[\label{eq:lg}
	\mathsf{G}=\{\mathsf{g}\in {\rm GL}(b_n,\mathbb{C})|~ \mathsf{g}~\mathrm{is~of~the~form~\eqref{eq:ss}~and}~\mathsf{g}\Phi\mathsf{g}^{\rm tr}=\Phi\}.
	\]
\begin{thm}\cite{Alim:2016}
For every element $\mathfrak{g} \in \operatorname{Lie}(\mathsf{G})$ there is a unique vector field $\mathsf{R}_{\mathfrak{g}}$ on $\mathsf{T}$ such that 
\[ \nabla_{\mathsf{R}_{\mathfrak{g}}} \wt = \mathfrak{g} \wt. \] 
\end{thm}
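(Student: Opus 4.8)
The plan is to realise $\mathsf{R}_{\mathfrak{g}}$ as the fundamental vector field of the natural $\mathsf{G}$-action along the fibres of $\mathsf{T}=V_0\times_{\mathbb{C}}U_0$, and then to use the Griffiths-transversal (Kodaira--Spencer) structure of the Gauss--Manin matrix $A$ to force any solution to coincide with it. The starting point is the formula \eqref{eq:mvfco}, which I rewrite as $\nabla\wt=B\wt$ with
\[
B:=(dS+SA)S^{-1}=dS\,S^{-1}+SAS^{-1}.
\]
Here $A$ is pulled back from the base $V_0$, hence horizontal (annihilated by vertical vectors), while $dS\,S^{-1}$ is vertical, since it only involves the fibre coordinates $s_{ij}$ on $U_0$. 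Throughout I identify $U_0$ with a neighbourhood of $S=\Id$ in the group $\mathsf{G}$, so that the fibre of $\mathsf{T}\to V_0$ is a $\mathsf{G}$-torsor.

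For \textbf{existence}, given $\mathfrak{g}\in\operatorname{Lie}(\mathsf{G})$ I would define $\mathsf{R}_{\mathfrak{g}}$ to be the vertical vector field (no $V_0$-component) generated by the left action $S\mapsto\exp(t\mathfrak{g})S$, i.e. characterised by $\iota_{\mathsf{R}_{\mathfrak{g}}}dS=\mathfrak{g}S$. First I would check that this field is tangent to $U_0$: on the locus $\{S\Phi S^{\mathrm{tr}}=\Phi\}$ of \eqref{paircond} one computes
\[
\mathsf{R}_{\mathfrak{g}}\!\left(S\Phi S^{\mathrm{tr}}-\Phi\right)=\mathfrak{g}\,(S\Phi S^{\mathrm{tr}})+(S\Phi S^{\mathrm{tr}})\,\mathfrak{g}^{\mathrm{tr}}=\mathfrak{g}\Phi+\Phi\mathfrak{g}^{\mathrm{tr}}=0,
\]
using exactly that $\mathfrak{g}\in\operatorname{Lie}(\mathsf{G})$; moreover $\mathfrak{g}S$ is block lower-triangular of the form \eqref{eq:ss}, so the zero-blocks are preserved. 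Since $\mathsf{R}_{\mathfrak{g}}$ is vertical and $A$ is horizontal, $\iota_{\mathsf{R}_{\mathfrak{g}}}A=0$, whence $\iota_{\mathsf{R}_{\mathfrak{g}}}B=(\mathfrak{g}S)S^{-1}=\mathfrak{g}$ and therefore $\nabla_{\mathsf{R}_{\mathfrak{g}}}\wt=\mathfrak{g}\wt$, as required.

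For \textbf{uniqueness}, I would decompose any candidate as $\mathsf{R}=\mathsf{R}^{\mathrm{v}}+\mathsf{R}^{\mathrm{h}}$ into vertical and horizontal parts and show each is determined. The vertical part is pinned down by the torsor structure: $v\mapsto\iota_v(dS\,S^{-1})$ is a linear isomorphism from vertical vectors onto $\operatorname{Lie}(\mathsf{G})$, so matching the block lower-triangular datum $\mathfrak{g}$ forces $\mathsf{R}^{\mathrm{v}}=\mathsf{R}_{\mathfrak{g}}$. The horizontal part is killed by a Hodge-theoretic argument: Griffiths transversality makes $A$ block lower-triangular together with a single super-diagonal block, which is the Kodaira--Spencer map; since both $\mathfrak{g}$ and $dS\,S^{-1}$ are block lower-triangular, the super-diagonal block of $\iota_{\mathsf{R}}B$ receives a contribution only from $S\,(\iota_{\mathsf{R}^{\mathrm{h}}}A)\,S^{-1}$ and must vanish to match $\mathfrak{g}$. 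As $S$ is invertible and block lower-triangular, this vanishing is equivalent to $\iota_{\mathsf{R}^{\mathrm{h}}}$ annihilating the Kodaira--Spencer block of $A$.

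The \emph{main obstacle} lies precisely in this last step: concluding $\mathsf{R}^{\mathrm{h}}=0$ requires that the underlying family be effectively parametrised, i.e. that the Kodaira--Spencer / infinitesimal Torelli map be injective, so that $v\mapsto\iota_v B$ is injective on all of $T\mathsf{T}$ (on vertical directions this is automatic from the torsor structure, but on horizontal directions it genuinely uses non-triviality of the variation). For the mirror (non-compact) Calabi--Yau families treated here the mirror map is an immersion and this injectivity holds, so the argument closes and gives $\mathsf{R}=\mathsf{R}_{\mathfrak{g}}$.
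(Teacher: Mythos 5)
Your argument is correct, but it is not the route this paper takes: the statement is imported from \cite{Alim:2016} without proof, and wherever the paper needs such vector fields concretely (the elliptic-curve case, Proposition \ref{prop:mvf}, and the local $\mathbb{P}^2$ and $\mathbb{F}_2$ computations) it proceeds by brute force, writing $\mathsf{R}=\sum_i i_*\partial_i$, contracting \eqref{eq:mvfco}, and solving the resulting overdetermined linear system, with uniqueness read off from its full rank. You instead argue structurally: existence via the fundamental vector field of the left $\mathsf{G}$-action $S\mapsto\exp(t\mathfrak{g})S$ on the fibre of $\mathsf{T}\to V_0$ (your tangency check against \eqref{eq:ss} and \eqref{paircond} is exactly right, and verticality kills the $SAS^{-1}$ term so that $\iota_{\mathsf{R}_\mathfrak{g}}B=\mathfrak{g}$), and uniqueness via the vertical/horizontal splitting plus injectivity of the Kodaira--Spencer block of $A$. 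Two remarks on the comparison. First, your uniqueness step should logically run horizontal-first: from $\iota_{\mathsf{R}^{\mathrm{v}}}(dS\,S^{-1})+S(\iota_{\mathsf{R}^{\mathrm{h}}}A)S^{-1}=\mathfrak{g}$ one matches superdiagonal blocks to get $\iota_{\mathsf{R}^{\mathrm{h}}}A^{+}=0$, hence $\mathsf{R}^{\mathrm{h}}=0$ by infinitesimal Torelli, and only then does the lower-triangular part pin down $\mathsf{R}^{\mathrm{v}}$; as written, ``matching the block lower-triangular datum'' for the vertical part silently assumes the lower-triangular contribution of $S(\iota_{\mathsf{R}^{\mathrm{h}}}A)S^{-1}$ is absent, which is only known after $\mathsf{R}^{\mathrm{h}}=0$. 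Second, you correctly identify that uniqueness genuinely requires effective parametrization (injectivity of $v\mapsto\iota_v A^{+}$); this is a hypothesis your general argument must invoke, whereas the paper's computational approach never needs to name it --- it is verified implicitly each time the explicit linear system turns out to have a unique solution. What each approach buys: the paper's method is self-contained and yields the explicit vector fields used in the later theorems; yours explains uniformly why the theorem holds for any enhanced family (pure or mixed Hodge) with injective Kodaira--Spencer map, and makes transparent that the Gauss--Manin Lie algebra elements $\mathsf{R}_\mathfrak{g}$ are nothing but the infinitesimal symmetries of the frame bundle structure of $\mathsf{T}$.
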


\begin{dfn}
The \textit{Gauss-Manin Lie algebra} $\mathfrak{G}$ is an $\mathcal{O}_{\mathsf{T}}$-module generated by the vector fields $\mathsf{R}_{\mathfrak{g}}$ and the modular vector fields.
\end{dfn}

\subsection{Elliptic curve} We review the case of a family of elliptic curves following the work of \cite{Movasati:20113}. Consider an affine variety in the weighted projective space $\mathbb{P}(1,2,3)$ given by
\be \label{eq:ellfam} \left\{W=x_1^6 + x_2^3  + x_3^2 - z^{-1/6} x_1 x_2 x_3 = 0 \right\}, \qquad z\in\mathbb{P}^1 \setminus \{ 0, \frac{1}{432}, \infty\},  \ee
with $x_i$ being the homogeneous coordinates on $\mathbb{P}(1,2,3)$.  There is an action of the finite group $\mathbb{Z}_2 \times \mathbb{Z}_3$ on it given by  \[\mathbb{Z}_2 : (x_1, x_2, x_3) \rightarrow ( \mu_2 x_1, x_2, \mu_2 x_3),\quad
\mathbb{Z}_3 : ( x_1, x_2,  x_3) \rightarrow ( \mu_3^2 x_1, \mu_3 x_2, x_3),\]
where $\mu_i$ is an $i$-th root of unity. A resolution of the quotient family gives rise to a family of smooth elliptic curves with a general fiber $E_z$. The monodromy group of this family is $\G_0(1)^*$ - the unique index two subgroup of ${\rm SL}_2(\mathbb{Z})$. The 
periods of the holomorphic differential 
\[
    \Omega={\rm Res}_{W=0}\left(\frac{-x_1dx_2\wedge dx_3+2x_2dx_1\wedge dx_3-3x_3dx_1\wedge dx_2}{W}\right),
\]
are annihilated by the following Picard-Fuchs operator
\[
 \th^2 - 12 z ( 6 \th + 5) ( 6 \th + 1), \qquad \th = z \frac{d}{d z}.  \]
Fix  a basis
\[ \w = ( \Omega, (1 - 432 z) \nabla_\th \Omega )^{\rm tr}, \]
of $H^1(E_z, \mathbb{C})$. It respects the Hodge filtration, i.~e.~$\Omega \in F^1, (1 - 432 z) \nabla_\th \Omega \in F^0$ and the pairing is 
 \[  \int_{E_z} \Omega \wedge (1-432 z) \nabla_\th \Omega  = -1. \]
In other words, the pairing matrix is
\[ 
\Phi = \langle \w_i, w_j \rangle = \begin{pmatrix}
 0 & -1 \\
 1 & 0 
\end{pmatrix},
\]
in this basis. The tranformations of the basis preserving filtration condition are of the form 
\[ S = \begin{pmatrix}
    s_{11} & 0 \\
    s_{21} & s_{22} 
  \end{pmatrix}.
  \]
The pairing condition \eqref{paircond} implies 
\[ S = \begin{pmatrix}
s^{-1}_{22} & 0 \\
s_{21} & s_{22} 
\end{pmatrix}.
\]

Therefore the moduli space $\mathsf{T}$ of enhanced elliptic curves is three-dimensional and we use a coordinate patch $\{ z, s_{21}, s_{22} \}$. 
 

 \begin{prop}
 There is a unique vector field $\mathsf{R}$ on $\mathsf{T}$ , such that
 \[ \nabla_{\mathsf{R}} \wt = \begin{pmatrix}
  0 & 1\\
  0 &0
 \end{pmatrix} \wt. \]
 It is given by 
 \[ \mathsf{R}= (1 - 432 z) z s_{22}^2 \frac{\pa}{\pa z} - s_{21} s_{22}^2 \frac{\pa}{\pa s_{22}}  - 60 z (1 - 432 z) s_{22}^3 \frac{\pa}{\pa s_{21}}.  \]
 \end{prop}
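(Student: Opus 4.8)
The plan is to turn the statement into an explicit linear-algebra problem by writing the Gauss--Manin connection in the moving frame $\wt = S\w_b$ through formula \eqref{eq:mvfco}, and then solving for the three components of $\mathsf{R}$ in the coordinate patch $\{z, s_{21}, s_{22}\}$. Writing $\mathsf{R} = R_z\,\pa_z + R_{21}\,\pa_{s_{21}} + R_{22}\,\pa_{s_{22}}$, the defining equation of the proposition becomes the requirement that the matrix obtained by contracting the connection one-form $\Gamma := (dS + SA)S^{-1}$ with $\mathsf{R}$ equal $\bigl(\begin{smallmatrix}0&1\\0&0\end{smallmatrix}\bigr)$. Both existence and uniqueness of $\mathsf{R}$ will then follow once this is exhibited as a triangular linear system in $(R_z, R_{22}, R_{21})$ with leading coefficients that are invertible regular functions on $\mathsf{T}$.

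First I would compute the base connection matrix $A$ defined by $\nabla\w_b = A\w_b$ in the frame $\w_b = (\Omega, \eta)^{\rm tr}$, where $\eta := (1-432z)\nabla_\th\Omega$. By definition $\nabla_\th\Omega = (1-432z)^{-1}\eta$. For the second entry, I would expand the Picard--Fuchs operator as $\th^2 - 12z(6\th+5)(6\th+1) = (1-432z)\th^2 - 432z\,\th - 60z$, so that annihilation of $\Omega$ gives $(1-432z)\nabla_\th^2\Omega = 432z\,\nabla_\th\Omega + 60z\,\Omega$. Combining this with $\th(1-432z) = -432z$ produces the clean relation $\nabla_\th\eta = 60z\,\Omega$, in which the middle terms cancel. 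Hence $\nabla_\th\w_b = \bigl(\begin{smallmatrix}0 & (1-432z)^{-1}\\ 60z & 0\end{smallmatrix}\bigr)\w_b$, and since $\th = z\,d/dz$ this yields $A = \bigl(\begin{smallmatrix}0 & \frac{1}{z(1-432z)}\\ 60 & 0\end{smallmatrix}\bigr)dz$.

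Next I would substitute $S = \bigl(\begin{smallmatrix} s_{22}^{-1} & 0 \\ s_{21} & s_{22}\end{smallmatrix}\bigr)$ and $S^{-1} = \bigl(\begin{smallmatrix} s_{22} & 0 \\ -s_{21} & s_{22}^{-1}\end{smallmatrix}\bigr)$ into \eqref{eq:mvfco} and read off the four entries of $\Gamma$ as explicit one-forms in $dz, ds_{21}, ds_{22}$. Because the frame $\wt$ has constant pairing matrix $\Phi$, which is the content of the condition \eqref{paircond}, and the Gauss--Manin connection preserves the pairing, $\Gamma$ is valued in the symplectic Lie algebra, which in rank two equals $\mathfrak{sl}_2(\mathbb{C})$; in particular $\operatorname{tr}\Gamma = 0$, so the $(1,1)$- and $(2,2)$-entries agree up to sign and there are exactly three independent scalar conditions to impose. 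Contracting with $\mathsf{R}$ and setting $\nabla_{\mathsf{R}}\wt = \bigl(\begin{smallmatrix}0&1\\0&0\end{smallmatrix}\bigr)\wt$ then gives a triangular system: the $(1,2)$-entry determines $R_z = (1-432z)z\,s_{22}^2$; substituting into the $(1,1)$-entry determines $R_{22} = -s_{21}s_{22}^2$; and the $(2,1)$-entry then determines $R_{21} = -60z(1-432z)s_{22}^3$, reproducing the asserted vector field.

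The leading coefficients of this system, namely $s_{22}^{-2}/\bigl(z(1-432z)\bigr)$ for $R_z$, then $-s_{22}^{-1}$ for $R_{22}$, and $s_{22}$ for $R_{21}$, are all non-vanishing on $\mathsf{T}$, where $z \neq 0, \tfrac{1}{432}$ and $s_{22} \neq 0$. The solution is therefore unique, which establishes existence and uniqueness simultaneously. I expect the main obstacle to be the Picard--Fuchs computation of $A$ rather than the matrix manipulations: the particular normalization $\eta = (1-432z)\nabla_\th\Omega$ is precisely what makes the $\nabla_\th\Omega$ terms cancel and yields the clean relation $\nabla_\th\eta = 60z\,\Omega$; once $A$ is in this form, everything downstream is bookkeeping in a $2\times 2$ matrix algebra.
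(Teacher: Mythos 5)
Your proposal is correct and follows essentially the same route as the paper, whose proof is precisely the direct computation you carry out: derive the connection matrix $A$ from the Picard--Fuchs operator, form $(dS+SA)S^{-1}$ in the frame $\wt$, and solve the resulting triangular linear system for the components of $\mathsf{R}$, with the trace-free (symplectic) structure of the connection guaranteeing consistency of the four equations in three unknowns. The computation checks out, including the key relation $\nabla_\th\eta = 60z\,\Omega$ and the uniqueness argument from the non-vanishing of $z(1-432z)$ and $s_{22}$ on $\mathsf{T}$.
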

 \begin{proof}
 Existence and uniqness follow from a direct computation. 
 \end{proof}

We consider the group 
\[ \mathsf{G} = \left\{S= \begin{pmatrix}
    s^{-1}_{22} & 0 \\
    s_{21} & s_{22} 
  \end{pmatrix}
   \;|\; s_{22} \in \mathbb{C}^*, s_{21} \in \mathbb{C} \right\}. \]
  Its Lie algebra is easily computable and has two generators 
  \[ \mathfrak{g}_{22} = \left. \left(\frac{\pa}{\pa s_{22}} S \right) S^{-1} \right|_{S=\operatorname{Id}}= \begin{pmatrix}
   -1 & 0 \\
   0 & 1 
  \end{pmatrix}, \]
  \[ \mathfrak{g}_{21} =\left. \left(\frac{\pa}{\pa s_{21}} S \right) S^{-1} \right|_{S=\operatorname{Id}}= \begin{pmatrix}
   0 & 0 \\
   1 & 0 
  \end{pmatrix}. \]
 To find the corresponding vector fields, we look for solutions of the system 
 \[ \left(i_{1*} \nabla_{\frac{\pa}{\pa z}} + i_{2*} \nabla_{\frac{\pa}{\pa s_{22}}} + i_{3*} \nabla_{\frac{\pa}{\pa s_{21}}} \right) \wt = \mathfrak{g}_{2*} \wt, \] 
 of four linear equations for three variables $i_{1*}, i_{2*}, i_{3*}$. Note, that if there is a solution it is unique. We find the following: 
 \[ \mathsf{R}_{\mathfrak{g}_{22}} = s_{22} \frac{\pa}{\pa s_{22}} + s_{21} \frac{\pa}{\pa s_{21}}, \qquad \mathsf{R}_{\mathfrak{g}_{21}} = \frac{1}{s_{22}}  \frac{\pa}{\pa s_{21}}. \]
 The vector field $\mathsf{R}_{\mathfrak{g}_{22}}$ is the so-called radial vector field.
 \begin{prop}
 The algebra generated by the vector fields $\mathsf{R}$, $\mathsf{R}_{\mathfrak{g}_{22}}$, $\mathsf{R}_{\mathfrak{g}_{21}}$ is isomorphic to the $\mathfrak{sl}_2(\mathbb{C})$ Lie algebra. 
 \end{prop}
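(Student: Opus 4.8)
The plan is to exploit the flatness of the Gauss--Manin connection to convert Lie brackets of vector fields into commutators of their connection matrices, and then to use the uniqueness statements for $\mathsf{R}$ and for the $\mathsf{R}_{\mathfrak{g}}$ to lift the resulting matrix identities back to identities of vector fields. The starting observation is that all three fields act on the frame $\wt$ through \emph{constant} matrices: writing $\nabla_{\mathsf{X}}\wt = M_{\mathsf{X}}\wt$, we have
\[
M_{\mathsf{R}} = \begin{pmatrix} 0 & 1 \\ 0 & 0 \end{pmatrix}, \quad M_{\mathsf{R}_{\mathfrak{g}_{22}}} = \mathfrak{g}_{22} = \begin{pmatrix} -1 & 0 \\ 0 & 1 \end{pmatrix}, \quad M_{\mathsf{R}_{\mathfrak{g}_{21}}} = \mathfrak{g}_{21} = \begin{pmatrix} 0 & 0 \\ 1 & 0 \end{pmatrix},
\]
all traceless and hence spanning $\mathfrak{sl}_2(\mathbb{C})$ inside the $2\times 2$ matrices. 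The connection $\nabla$ on $\mathcal{H}^1(\mathsf{X}/\mathsf{T})$ is flat: it is the pullback of the flat Gauss--Manin connection on $\mathcal{H}^1(\mathcal{X}/\mathsf{B})$ along the projection $\mathsf{T}=V_0\times_{\mathbb{C}} U_0 \to V_0$, re-expressed in the frame $\wt = S\w_b$ via \eqref{eq:mvfco}.

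For any two vector fields $\mathsf{X}, \mathsf{Y}$ with constant matrices $M_{\mathsf{X}}, M_{\mathsf{Y}}$, the vanishing of the curvature $\nabla_{\mathsf{X}}\nabla_{\mathsf{Y}} - \nabla_{\mathsf{Y}}\nabla_{\mathsf{X}} - \nabla_{[\mathsf{X},\mathsf{Y}]} = 0$ applied to $\wt$ gives, since the Leibniz terms drop out for constant matrices,
\[
\nabla_{[\mathsf{X},\mathsf{Y}]}\wt = \left( M_{\mathsf{Y}} M_{\mathsf{X}} - M_{\mathsf{X}} M_{\mathsf{Y}} \right)\wt = -[M_{\mathsf{X}}, M_{\mathsf{Y}}]\,\wt ,
\]
so the connection matrix of a bracket is the negative of the matrix commutator. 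I would then record the three commutators
\[
[\mathfrak{g}_{22}, \mathfrak{g}_{21}] = 2\mathfrak{g}_{21}, \qquad [\mathfrak{g}_{22}, M_{\mathsf{R}}] = -2 M_{\mathsf{R}}, \qquad [M_{\mathsf{R}}, \mathfrak{g}_{21}] = -\mathfrak{g}_{22},
\]
which shows that the span of the three matrices is closed under the bracket and that the connection matrices of $[\mathsf{R}_{\mathfrak{g}_{22}}, \mathsf{R}_{\mathfrak{g}_{21}}]$, $[\mathsf{R}_{\mathfrak{g}_{22}}, \mathsf{R}]$ and $[\mathsf{R}, \mathsf{R}_{\mathfrak{g}_{21}}]$ are $-2\mathfrak{g}_{21}$, $2M_{\mathsf{R}}$ and $\mathfrak{g}_{22}$ respectively.

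Finally I lift these matrix identities to identities of vector fields. The matrices $-2\mathfrak{g}_{21}$ and $\mathfrak{g}_{22}$ lie in $\operatorname{Lie}(\mathsf{G})$, so by linearity of $\mathfrak{g}\mapsto\mathsf{R}_{\mathfrak{g}}$ and the uniqueness in the cited theorem of \cite{Alim:2016} one gets $[\mathsf{R}_{\mathfrak{g}_{22}}, \mathsf{R}_{\mathfrak{g}_{21}}] = -2\mathsf{R}_{\mathfrak{g}_{21}}$ and $[\mathsf{R}, \mathsf{R}_{\mathfrak{g}_{21}}] = \mathsf{R}_{\mathfrak{g}_{22}}$. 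For the remaining bracket the matrix is $2M_{\mathsf{R}}$, which is not of the lower--triangular form defining $\operatorname{Lie}(\mathsf{G})$; here I instead apply the uniqueness in the Proposition above, noting that $\tfrac{1}{2}[\mathsf{R}_{\mathfrak{g}_{22}}, \mathsf{R}]$ is a vector field whose connection matrix equals $M_{\mathsf{R}}$ and must therefore coincide with $\mathsf{R}$, giving $[\mathsf{R}_{\mathfrak{g}_{22}}, \mathsf{R}] = 2\mathsf{R}$. Setting $H := \mathsf{R}_{\mathfrak{g}_{22}}$, $E := \mathsf{R}$, $F := \mathsf{R}_{\mathfrak{g}_{21}}$, the three relations read $[H,E] = 2E$, $[H,F] = -2F$, $[E,F] = H$, which are precisely the defining relations \eqref{eq:sl2c} of $\mathfrak{sl}_2(\mathbb{C})$. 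Since $\mathsf{R}$ is the only one of the three with a nonzero $\partial/\partial z$ component and $\mathsf{R}_{\mathfrak{g}_{22}}$ the only one with an $s_{22}\,\partial/\partial s_{22}$ component, the three fields are linearly independent over $\mathbb{C}$, so their $\mathbb{C}$-span is a three--dimensional Lie algebra with these structure constants, hence isomorphic to $\mathfrak{sl}_2(\mathbb{C})$.

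The computations are routine; the only points requiring care are (i) confirming flatness of $\nabla$ on the enhanced moduli space, which is what makes the bracket--to--commutator dictionary valid, and (ii) the lifting step for $[\mathsf{R}_{\mathfrak{g}_{22}}, \mathsf{R}]$, where the relevant matrix leaves $\operatorname{Lie}(\mathsf{G})$ and one must invoke the uniqueness of the modular vector field rather than that of the $\mathsf{R}_{\mathfrak{g}}$. As a cross--check I would verify at least one bracket, e.g.\ $[\mathsf{R}_{\mathfrak{g}_{22}}, \mathsf{R}_{\mathfrak{g}_{21}}] = -2\mathsf{R}_{\mathfrak{g}_{21}}$, directly from the explicit coordinate expressions of the fields.
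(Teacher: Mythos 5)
Your proof is correct, but it takes a genuinely different route from the paper. The paper's proof is a one-line direct verification: it computes the three Lie brackets of the explicit coordinate vector fields $\mathsf{R}$, $\mathsf{R}_{\mathfrak{g}_{22}}$, $\mathsf{R}_{\mathfrak{g}_{21}}$ on $\mathsf{T}$ (in the coordinates $z, s_{21}, s_{22}$) and reads off the structure constants $[\mathsf{R},\mathsf{R}_{\mathfrak{g}_{21}}]=\mathsf{R}_{\mathfrak{g}_{22}}$, $[\mathsf{R},\mathsf{R}_{\mathfrak{g}_{22}}]=-2\mathsf{R}$, $[\mathsf{R}_{\mathfrak{g}_{21}},\mathsf{R}_{\mathfrak{g}_{22}}]=2\mathsf{R}_{\mathfrak{g}_{21}}$, which agree (after relabeling) with the relations you obtain. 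You instead exploit the flatness of the Gauss--Manin connection: since all three fields act on the frame $\wt$ by constant matrices, the curvature identity turns Lie brackets of vector fields into (negatives of) matrix commutators, and the uniqueness clauses — of the theorem from \cite{Alim:2016} when the resulting matrix lies in $\operatorname{Lie}(\mathsf{G})$, and of the Proposition defining $\mathsf{R}$ when it does not — lift the matrix identities back to identities of vector fields. Both steps are sound: the connection on $\mathcal{H}^1(\mathsf{X}/\mathsf{T})$ is indeed the pullback of the flat Gauss--Manin connection re-expressed in the frame $S\w_b$, so flatness holds, and your sign bookkeeping ($\nabla_{[\mathsf{X},\mathsf{Y}]}\wt=-[M_{\mathsf{X}},M_{\mathsf{Y}}]\wt$ for constant matrices) is right; your resulting relations match the paper's. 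What your approach buys is conceptual transparency and reusability: the structure constants are visibly those of the standard $\mathfrak{sl}_2(\mathbb{C})$ triple of $2\times 2$ matrices, no coordinate expressions for the vector fields are ever needed beyond linear independence, and the identical argument applies verbatim to the local $\mathbb{P}^2$ and local $\mathbb{F}_2$ cases (Theorem 3), where direct evaluation of brackets is more laborious. What the paper's approach buys is brevity and self-containedness — given that the explicit vector fields were already computed, checking three brackets is immediate and requires no appeal to flatness or uniqueness.
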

 \begin{proof} The proof follows from the explicit evaluation of the commutation relations. Explicitly, we find
\[ [\mathsf{R}, \mathsf{R}_{\mathfrak{g}_{21}}] = \mathsf{R}_{\mathfrak{g}_{22}}, \quad [\mathsf{R}, \mathsf{R}_{\mathfrak{g}_{22}}] = -2 \mathsf{R}, \quad [ \mathsf{R}_{\mathfrak{g}_{21}}, \mathsf{R}_{\mathfrak{g}_{22}}] = 2 \mathsf{R}_{\mathfrak{g}_{21}}. \]
\end{proof}
The construction of this section leads to version of a theorem of \cite{Movasati:20113} for the family \eqref{eq:ellfam}.

\begin{thm}
There is a ring isomorphism between the  ring of regular functions $\mathcal{O}_{\mathsf{T}}$ equipped with differential structures coming form the vector fields $\mathsf{R}$, $\mathsf{R}_{\mathfrak{g}_{22}}$, $\mathsf{R}_{\mathfrak{g}_{21}}$ and the differential ring $R_{1^*}$  with the differential structures described in the section  \ref{sec:app}.
\end{thm}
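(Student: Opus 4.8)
The plan is to construct an explicit isomorphism of differential rings $\Psi\colon R_{1^*}\to\mathcal{O}_{\mathsf{T}}$ by matching generators, with the dictionary dictated by the period map of the family \eqref{eq:ellfam}. The periods of $\Omega$ define a multivalued map to $\mathbb{H}$ whose monodromy is $\Gamma_0(1)^*$; writing $\tau$ for the ratio of a chosen pair of periods, the base coordinate $z$ pulls back to a weight-zero modular function and the singular values $z=0,\tfrac{1}{432},\infty$ go to the three cusps. Under this uniformization I expect the Hauptmodul to be $\alpha=432\,z$, so that the fixed points $\alpha=0,1$ of $\partial_\tau\alpha=\alpha(1-\alpha)A^2$ match $z=0,\tfrac{1}{432}$, the holomorphic period $\Omega$ to be the weight-one form $A$, whence $A=s_{22}$, and the off-diagonal normalization entry $s_{21}$ to encode the quasi-modular generator $E$. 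Comparing $\mathsf{R}(432z)$, computed from the explicit vector field $\mathsf{R}$ above, with $\partial_\tau\alpha=\alpha(1-\alpha)A^2$ already forces $s_{22}^2=A^2$, and comparing $\mathsf{R}(s_{22})$ with $\partial_\tau A$ then pins down the remaining identifications
\begin{equation*}
E=-12\,s_{21}s_{22}-(864\,z-1)\,s_{22}^2,\qquad B=(1-\alpha)^{1/6}A,\qquad C=\alpha^{1/6}A.
\end{equation*}

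With the dictionary fixed I would first check that $\Psi$ is a well-defined ring homomorphism: the relations $B^6=(1-\alpha)A^6$ and $C^6=\alpha A^6$ hold because $\Psi(B)^6=(1-432z)s_{22}^6$ and $\Psi(C)^6=432z\,s_{22}^6$ by construction. Next I would verify that $\Psi$ intertwines the three differential structures. For $\partial_\tau\leftrightarrow\mathsf{R}$ this reduces to checking the five relations in \eqref{eq:ringmf} on the images of $\alpha,A,B,C,E$ using the explicit form of $\mathsf{R}$; the cases of $\alpha$ and $A$ are the content of the previous paragraph, the relation for $E$ collapses to the elementary identity $(864z-1)^2-1=-1728\,z(1-432z)$, and the relations for $B,C$ follow from those for $\alpha$ and $A$. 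For the weight operator $H\leftrightarrow\mathsf{R}_{\mathfrak{g}_{22}}$ one checks that the radial field $\mathsf{R}_{\mathfrak{g}_{22}}=s_{22}\partial_{s_{22}}+s_{21}\partial_{s_{21}}$ assigns $\Psi(\alpha),\Psi(A),\Psi(E)$ the weights $0,1,2$, matching those of $\alpha,A,E$. For $F\leftrightarrow\mathsf{R}_{\mathfrak{g}_{21}}$ one checks that $\mathsf{R}_{\mathfrak{g}_{21}}=s_{22}^{-1}\partial_{s_{21}}$ annihilates $\Psi(\alpha),\Psi(A)$ and sends $\Psi(E)\mapsto-12$, exactly as $F\alpha=FA=0$ and $FE=-2r\,\partial_E P_E(E)=-12$ for $r=6$.

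It then remains to show $\Psi$ is bijective. Surjectivity is immediate since the image contains a generating set: $z=\Psi(\alpha)/432$, $s_{22}=\Psi(A)$ and $s_{21}=-\tfrac{1}{12}\Psi(A)^{-1}\bigl(\Psi(E)+(864z-1)\Psi(A)^2\bigr)$, together with the inverses of the non-vanishing generators present after the localizations implicit in the definitions of both rings. Injectivity follows by a dimension count: $R_{1^*}$ and $\mathcal{O}_{\mathsf{T}}$ are finitely generated integral domains over $\mathbb{C}$ of Krull dimension $3$, and a surjection of such domains with equal dimension has trivial kernel, hence is an isomorphism. Because $\Psi$ carries $\mathsf{R},\mathsf{R}_{\mathfrak{g}_{22}},\mathsf{R}_{\mathfrak{g}_{21}}$ to $\partial_\tau,H,F$, it is an isomorphism of differential rings, which is the assertion.

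The step I expect to be the genuine obstacle is fixing the dictionary rigorously rather than merely consistently: the verifications above only show that the proposed $\Psi$ is \emph{a} differential homomorphism, whereas to know it is the isomorphism induced by geometry one must actually compute the period map and match the leading behaviours at the cusp, so that $z,s_{21},s_{22}$ are literally the stated (quasi-)modular forms. This is also where the fractional powers demand care: the generators $B,C$ (and the $1/24$-power defining $\eta$) are only sixth roots of regular functions, reflecting the orbifold structure of the family in $\mathbb{P}(1,2,3)$ and the factor $z^{-1/6}$ in \eqref{eq:ellfam}, so the matching must be carried out on the appropriate root cover with the localizations chosen compatibly, and the normalization constants $432$ and $60$ — the latter originating from the coefficient $6\theta+5$ in the Picard--Fuchs operator — must be tracked throughout.
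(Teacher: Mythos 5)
Your proposal is correct and takes essentially the same approach as the paper: the paper's isomorphism is exactly your dictionary $\alpha = 432z$, $A = s_{22}$, $E = (1-864z)s_{22}^2 - 12 s_{21}s_{22}$, with $\mathsf{R}$, $\mathsf{R}_{\mathfrak{g}_{22}}$, $\mathsf{R}_{\mathfrak{g}_{21}}$ carried to $\partial_\tau$, $W$, $\mathfrak{d}$ respectively. Your additional work — the explicit verification of the five differential relations, the surjectivity/injectivity argument, and the flagged care with the sixth roots defining $B$ and $C$ — only supplies detail that the paper's three-line proof leaves implicit.
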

\begin{proof}
The isomorphism is given by sending $(z, s_{22}, s_{21})$ to 
\[ \alpha = 432 z, \qquad A= s_{22}, \qquad  E = ( 1 - 864 z) s_{22}^2 - 12 s_{21} s_{22}. \]
Then modular vector field $\mathsf{R}$ is equivalent to the differential relations from  \ref{sec:app}. Moreover, 
the action of $\mathsf{R}_{\mathfrak{g}_{22}}$ and $\mathsf{R}_{\mathfrak{g}_{21}}$ is equivalent to the action of $W$ and $\mathfrak{d}$ respectively. 
\end{proof}

\section{Non-compact CY varieties and mixed Hodge structure} \label{sec:nc}

\subsection{Local CY and mirror families}\label{ssec:lcy}
In this work we consider toric CY threefolds which are given by the total space of a bundle over a surface or a curve, we call these local CY threefolds. Toric varieties can be described by a fan $\Sigma$, but substantial amount of information about it is encapsulated by a charge matrix $Q^i_k$ that encodes the linear relations between one dimensional cones $\Sigma(1)$ of the fan $\Sigma$, i.~e.~$ \sum_{i=0}^{s-1} Q^i_k v_i = 0$ for $v_i \in \Sigma (1)$ and $s=|\Sigma(1)|$. The mirror families of such varieties are known to have the following form, see e.~g.~\cite{klemm1996self, katz1997mirror,chiang1999local,hosono2000local}: 
\be \qquad 
{\mathcal{X}} = \left\{uv + F_{\pmb{a}}(y_i)=  u v + \sum_{i=0}^{s-1}a_i y_i = 0  \; | ~ u, v \in \mathbb{C}, y_i \in \mathbb{C}^{*}, \prod_{i=0}^{s-1} y_i^{Q^i_k} = 1 \right\}. \label{mirror}  
\ee
It is also common to set as a mirror family a family of curves 
\be
\Sigma = \left\{ F_{\pmb{a}}(y_i)=  \sum_{i=0}^{s-1}a_i y_i = 0  \; | ~  y_i \in \mathbb{C}^{*}, \prod_{i=0}^{s-1} y_i^{Q^i_k} = 1 \right\}. \label{mirror2}  
\ee
We are interested in a special class of local CY varieties, namely canonical bundles over del Pezzo surfaces. In this case all the information can be encoded in a 2-dimensional reflexive polytope $\D$.  In particular, the set of the one dimensional cones $v_i$ of the fan $\Sigma$ corresponds to the set $A(\D)$ of the integral points $m_i$ of the polytope $\D$, thus the coordinates $y_i$ can be parametrized by 

\[ y_i=t^{m_i} \coloneqq t_1^{m_{i,1}} t_2^{m_{i,2}}, \quad t_1, t_2 \in \mathbb{C}^*,  \] 
for $m_i = (m_{i,1}, m_{i,2} ) \in A(\D)$. 
Hence the information about the general fibre $X_{\pmb{a}}$ of $\mathcal{X}$ is encoded in the polynomial 
\[ F_{\pmb{a}}(t_1, t_2) = \sum_{{m_i} \in A(\D)} a_i t^{m_i}. \]
The parameters $a_i$ redundantly describe the deformations of complex structure on $X_{\pmb{a}}$. The GIT quotient of the space of parameters $a_i$ by the natural torus action induced by the action $F(t_1, t_2)\rightarrow \l_0 F(\l_1 t_1, \l_2 t_2)$ gives the complex structure moduli space $\mathsf{B}$, for which the torus-invariant local coordinates can be chosen as $z_k=(-1)^{Q_k^0}\prod_i a_i^{Q^i_k}$. 

The holomorphic 3-form  on $X_{\pmb{a}}$ is given by the residue of the top form on the ambient space
\[ \Omega = \mathrm{Res}\left[\frac{1}{F_{\pmb{a}}(t_1,t_2) + u v}\frac{dt_1}{t_1} \wedge \frac{dt_2}{t_2}\wedge{du}\wedge{dv}\right], \label{3form} \]
where $\mathrm{Res}$ is the  residue map
\[\mathrm{Res}: H^4( \mathbb{C}^2 \times (\mathbb{C}^*)^2 \setminus X_{\pmb{a}}) \rightarrow  H^3(X_{\pmb{a}}),\]
which sends  an $\omega \in H^4( \mathbb{C}^2 \times (\mathbb{C}^*)^2 \setminus X_{\pmb{a}})$ to $  \int_\gamma w $, with $\gamma \in H_4 ( \mathbb{C}^2 \times (\mathbb{C}^*)^2\setminus X_{\pmb{a}})$ being a cycle around $ uv + F_{\pmb{a}}(t_1,t_2) = 0 $. The periods of the holomorphic 3-form satisfy Picard-Fuchs equations, which are obtained from a GKZ system \cite{Gel:1989} of differential equations
\[
\left[ \prod_{i: Q^i_k >0}\left( \frac{\p}{\p a_i} \right)^{Q^i_k } - \prod_{i: Q^i_k <0}\left( \frac{\p}{\p a_i} \right)^{-Q^i_k } \right] \int_\gamma \Omega =0, \quad \gamma \in H_3(X_{\pmb{a}},\mathbb{Z}). \label{eq:gkz}
\]

\subsection{Mixed Hodge structure}

In contrast to the compact CY varieties, cohomology groups of non-compact CY varieties are naturally endowed with a mixed Hodge structure introduced by \cite{deligne1971}.
\begin{dfn}
	A \textit{mixed Hodge structure} $(H_\mathbb{Z},W_\bullet,F^\bullet)$  is a $\mathbb{Z}$-module $H_\mathbb{Z}$ together with an increasing (weight) filtration $W_\bullet$ on $H_\mathbb{Q}=H_\mathbb{Z}\otimes_\mathbb{Z}\mathbb{Q}$ and a decreasing (Hodge) filtration $F^\bullet$, which defines a pure $\mathbb{Q}$-Hodge structure of weight $i$ on the graded piece $\operatorname{Gr}^W_i=W_i/W_{i-1}$.

A mixed Hodge structure  $(H_\mathbb{Z},W_\bullet,F^\bullet)$ is called \textit{graded-polarizable} if the induced Hodge structure on $\operatorname{Gr}_i^W$ is polarizable for all $i$. 
\end{dfn}
\begin{dfn}
	A \textit{variation of mixed Hodge structure} on a complex variety $\mathsf{B}$ is a triple $(\mathcal{H}_\mathbb{Z},\mathcal{W}_\bullet,\mathcal{F}^\bullet)$, where
	\begin{enumerate}
		\item $\mathcal{H}_\mathbb{Z}$ is a locally constant sheaf of finitely generated $\mathbb{Z}$-modules on $\mathsf{B}$,
		\item $\mathcal{W}_\bullet$ is an increasing filtration of $\mathcal{H}_\mathbb{Q}=\mathcal{H}_\mathbb{Z}\otimes_\mathbb{Z}\mathbb{Q}$ by locally constant subsheaves,
		\item $\mathcal{F}^\bullet$ is a decreasing filtration on $\mathcal{H}=\mathcal{H}_\mathbb{Z}\otimes_\mathbb{Z}\mathcal{O}_\mathsf{B}$ by holomorphic subbundles, satisfying Griffiths' transversality,
	\end{enumerate}
	such that for each $b \in \mathsf{B}$ the filtrations $\mathcal{F}_b^\bullet$ and $\mathcal{W}_{\bullet,b}$ define a mixed Hodge structure.  
	A variation of mixed Hodge structure is called \textit{graded-polarizable} if there exists a pairing $\langle-,-\rangle$, flat with respect to the Gauss-Manin connection $\nabla$ on each $\operatorname{Gr}^\mathcal{W}_i=\mathcal{W}_i/\mathcal{W}_{i-1}$.
\end{dfn}

A mixed Hodge structure on the middle cohomology $H^3(X_{\pmb{a}},\mathbb{C})$ for the families \eqref{mirror} was described in \cite{Batyrev:1994,konishi2010local} (see also \cite{stienstra1997resonant}). Define the following ring: 
\[ \mathbf{S}_\Delta  = \bigoplus_{k\geq0} \mathbf{S}_\Delta^k, \qquad \mathbf{S}_\Delta^k = \bigoplus_{m \in A(\Delta(k))} \mathbb{C} t_0^k t^m,  \] 
where  $t_0$ is an additional parameter and \[ \Delta(k)  \coloneqq \left\{ m \in \mathbb{Z}^2 \subseteq \mathbb{R}^2 \: | \:  \frac{m}{k} \in \D \right\} \]
for $k \geq 1$, $\D(0) \coloneqq \{ 0 \} \subset \mathbb{R}^2$. The grading is given by $ \operatorname{deg}(t_0^k t^m) = k$.  
Define also the following differential operators on $\mathbf{S}_\D$
\[  \mathcal{D}_0 \coloneqq \th_{t_0} + t_0 F_{\pmb{a}}, \quad  \mathcal{D}_i \coloneqq \th_{t_i} + t_0  \th_{t_i}F_{\pmb{a}} , \quad i=1,\dots,s-1,\] and a $\mathbb{C}$-vector space 
\[ \mathcal{R}_{F_{\mathbf{a}}} \coloneqq \mathbf{S}_\D / (\sum_{i=0}^{s-1} \mathcal{D}_i \mathbf{S}_\D). \] 
There is a decreasing filtration on $\mathcal{R}_{F_{\mathbf{a}}}$
\[  0 \subset \mathcal{E}^0 \subset \mathcal{E}^{-1} \subset \mathcal{E}^{-2} \subset \dots,  \]  
with $\mathcal{E}^{-k}$ being a subspace generated by monomials of degree $\leq k$. 

There is also an increasing filtration. Define $ I_\D^{(j)}$ to be the homogeneous ideals in $\mathbf{S}_\D$ generated as $\mathbb{C}$-subspaces by all monomials $t_0^k t^m$ ($k\geq 1$) with $m \in \D(k)$ which does not belong to any face of codimension $j$. Since everything belongs to the codimension 0 face, we get $ I_\D^{(0)}$ = 0. Everything that belongs to the face of codimension 2 also belongs to some face of codimension 1, therefore $ I_\D^{(1)} \subset I_\D^{(2)}$. There are no faces of codimension 3, thus $ I_\D^{(3)}  =\bigoplus_{k\geq1} \mathbf{S}_\Delta^k $, which contains the last two, and set $ I_\D^{(4)} = \mathbf{S}_\D$. These form an increasing filtration on $\mathbf{S}_\D$:
\[ 0 =  I_\D^{(0)} \subset I_\D^{(1)} \subset I_\D^{(2)} \subset I_\D^{(3)} \subset I_\D^{(4)} = \mathbf{S}_\D, \]
which defines under the quotient an increasing filtration on $\mathcal{R}_{F_{\pmb{a}}}$: 
 \[ 0 = \mathcal{I}_0 \subset \mathcal{I}_1 \subset \mathcal{I}_2 \subset \mathcal{I}_3 \subset \mathcal{I}_4 = \mathcal{R}_{F_{\pmb{a}}}, \]
where $\mathcal{I}_j$ is the image of $I_\D^{(j)}$ in $\mathcal{R}_{F_{\pmb{a}}}$. 

Let us consider $\mathbf{S}_\Delta [\pmb{a}] = \mathbf{S}_\Delta \otimes \mathbb{C}[\pmb{a}]$ and \[ \mathcal{R}_{F_{\pmb{a}}} [\pmb{a}] \coloneqq \mathbf{S}_\D [\pmb{a}] / (\sum_{i=0}^n \mathcal{D}_i \mathbf{S}_\D [\pmb{a}]). \] 
On $\mathbf{S}_\Delta [\pmb{a}]$ we can define the following differential operators:   
\[ \mathcal{D}_{a_i} \coloneqq \frac{\pa}{\pa a_i} + t_0 t^{m_i}.  \] 
Since they commute with $\mathcal{D}_i$, they descend to $\mathcal{R}_{F_{\pmb{a}}} [\pmb{a}]$. 
\begin{thm} [\cite{konishi2010local}]
There is an isomorphism 
\[  r: \mathcal{R}_{F_{\pmb{a}}} \cong H^3( X_{\pmb{a}}, \mathbb{C}),  \]
for every $\pmb{a}$. 
Therefore, as a consequence,  the filtrations $ \mathcal{E}_\bullet\,, \mathcal{I}_\bullet$ define a mixed Hodge structure on $H^3( X_{\pmb{a}},\mathbb{C})$, namely  $F^{6-k}=r(\mathcal{E}^{-k}) $ and $ W_3= r(\mathcal{I}_1)$, $W_4 = W_5 = r(\mathcal{I}_3)$, $W_6 = r(\mathcal{I}_4)$. 
Moreover the derivations $ \mathcal{D}_{a_i} $ correspond to the Gauss-Manin connection $\nabla_{\frac{\pa}{\pa a_i}}$.
\end{thm}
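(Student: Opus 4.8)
The plan is to adapt Griffiths--Dwork theory to the Laurent/toric setting and to read off both filtrations, as well as the connection, from the residue presentation of $\Omega$. First I would establish the isomorphism $r$. The ambient four-form $\tfrac{1}{uv+F_{\pmb{a}}}\,\tfrac{dt_1}{t_1}\wedge\tfrac{dt_2}{t_2}\wedge du\wedge dv$ together with its higher-pole multiples $\tfrac{t^m}{(uv+F_{\pmb{a}})^{k+1}}$ spans the relevant part of $H^4$ of the complement; integrating out the $(u,v)$ directions (the suspension encoded by the $uv$ term) reduces everything to data on the two-torus attached to the Newton polytope $\Delta$, which is exactly what the monomials $t_0^k t^m\in\mathbf{S}_\Delta$ record, the variable $t_0$ bookkeeping the pole order. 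The relations generated by the operators $\mathcal{D}_i$ are precisely the integration-by-parts (exact-form) relations, so the residue map descends to a well-defined map $r$ on $\mathcal{R}_{F_{\pmb{a}}}$. To see that $r$ is an isomorphism I would show that the Koszul-type complex built from $\mathcal{D}_0,\dots,\mathcal{D}_{s-1}$ is a resolution; this exactness is where the hypothesis that $F_{\pmb{a}}$ be nondegenerate with respect to $\Delta$ enters, and it is the main technical obstacle.

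With $r$ in hand, the Hodge filtration is read off from the monomial degree, which is what $t_0$ tracks. The residue construction sends the class of a degree-$k$ monomial into $F^{6-k}$, and raising the degree by one lowers the Hodge level by exactly one. Since $\mathcal{E}^{-k}$ is by definition generated by monomials of degree $\le k$, this yields $F^{6-k}=r(\mathcal{E}^{-k})$; the only content is the standard identification of the pole-order (equivalently, degree) filtration with the Hodge filtration of Griffiths type, now in the toric/Laurent setting.

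The weight filtration is the genuinely mixed feature and reflects the noncompactness of $X_{\pmb{a}}$. Here I would pass to a smooth toric compactification and use the stratification of the boundary divisor by torus orbits, which correspond bijectively to the faces of $\Delta$. The associated Gysin (weight) spectral sequence shows that a monomial supported away from all faces of codimension $j$ contributes with weight governed by $j$: strictly interior monomials give the pure weight-three part, monomials meeting the edges and vertices raise the weight, and the degree-zero generator sits at the top weight six. Matching the combinatorial ideals then gives $W_3=r(\mathcal{I}_1)$, $W_4=W_5=r(\mathcal{I}_3)$ and $W_6=r(\mathcal{I}_4)$. Controlling the mixed Hodge structure through the compactification, and identifying face codimension with weight, is the second delicate step.

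Finally, for the Gauss--Manin statement I would differentiate the residue presentation directly: $\tfrac{\partial}{\partial a_i}\tfrac{1}{uv+F_{\pmb{a}}}=-\tfrac{t^{m_i}}{(uv+F_{\pmb{a}})^2}$, which under the dictionary above is, up to sign, multiplication by $t_0\,t^{m_i}$ together with the naive action of $\partial/\partial a_i$ on the coefficients --- that is, exactly the operator $\mathcal{D}_{a_i}=\partial/\partial a_i + t_0 t^{m_i}$. Since the $\mathcal{D}_{a_i}$ commute with the $\mathcal{D}_i$, they descend to $\mathcal{R}_{F_{\pmb{a}}}[\pmb{a}]$, and the computation above identifies the descended action with $\nabla_{\partial/\partial a_i}$ on $H^3(X_{\pmb{a}},\mathbb{C})$, with Griffiths transversality following formally from the degree bookkeeping already recorded.
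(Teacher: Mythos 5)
There is an important mismatch of expectations here: the paper does not prove this theorem at all. It is imported, with citation, from Konishi--Minabe \cite{konishi2010local} (whose argument in turn rests on Batyrev's theory of mixed Hodge structures of affine hypersurfaces in algebraic tori \cite{Batyrev:1994} and on \cite{stienstra1997resonant}). So there is no internal proof to compare your attempt against; the only meaningful comparison is with the argument in the cited literature. Measured against that argument, your outline has the right skeleton and is essentially the route taken there: (i) your ``integrating out the $(u,v)$ directions'' is, in \cite{konishi2010local}, made precise via the conic-bundle projection $X_{\pmb{a}} \to (\mathbb{C}^*)^2$, degenerating exactly over the curve $\Sigma_{\pmb{a}}$, which identifies $H^3(X_{\pmb{a}})$ with cohomology attached to the pair (torus, curve); (ii) the identification of that cohomology with $\mathcal{R}_{F_{\pmb{a}}}$, of the pole-order/degree filtration with $F^\bullet$, and of the face-codimension ideals $I_\Delta^{(j)}$ with $W_\bullet$ is precisely Batyrev's theorem for Laurent polynomials nondegenerate with respect to $\Delta$; (iii) the Gauss--Manin statement is indeed obtained by differentiating under the residue, exactly as you do.

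That said, as a proof your text is a program rather than an argument. The two steps you yourself flag --- exactness of the Koszul-type complex under $\Delta$-regularity, and the weight spectral sequence of the toric compactification --- are not side issues; they \emph{are} the content of the theorem, and you do not carry either out. There is also one piece of bookkeeping you gloss over that is easy to get wrong: the conic-bundle/suspension identification is an isomorphism of mixed Hodge structures only up to a Tate twist. It is precisely this twist that places $H^1(\overline{\Sigma}_{\pmb{a}})$ in weight $3$ (rather than weight $1$) inside $H^3(X_{\pmb{a}})$ and produces the shifted indexing of the statement; a naive reading of your Gysin argument on the curve/torus side would assign the interior monomials weight $1$ and miss the statement by a uniform shift. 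Relatedly, be aware that the indexing $F^{6-k}=r(\mathcal{E}^{-k})$ in the quoted theorem follows the convention of \cite{konishi2010local}, which is designed to match the A-model grading and differs by such a twist from the standard Deligne convention used in the paper's own local $\mathbb{P}^2$ example (where the filtration is written $0=F^4\subset F^3\subset F^2\subset F^1=F^0$); any complete proof has to commit to one convention and track the twist explicitly.
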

\begin{rem}[\cite{konishi2010local}]
The weight 3 filtration space $W_3$ is in fact just the first cohomology of the corresponding curve $\overline{\Sigma}_{\pmb{a}}$, which is the compactification of a fibre of \eqref{mirror2} at $\pmb{a}$:
\[ W_3 H^3(X_{\pmb{a}}) \cong H^1(\overline{\Sigma}_{\pmb{a}}). \] 
\end{rem}

For the families \eqref{mirror} there is an inclusion $W_3 H^3(X_{\pmb{a}}) \cong H^1(\overline{\Sigma}_{\pmb{a}}) \subset H^3(X_a)$. We define a polarization $\langle -,- \rangle $ to be equal to the intersection pairing on $W_3 H^3(X_a)$ and 0 otherwise. It was shown in \cite{konishi2010local} that this pairing is flat on $\operatorname{Gr}_i^W$ with respect to the Gauss-Manin connection. Since in the case of local del Pezzo all the mirror curves are elliptic curves ( see \cite{chiang1999local}), in a suitable basis the pairing matrix is
\[
\begin{pmatrix}
 0_{(b_n-2)\times(b_n-2)} & 0_{(b_n-2)\times 2}
  \\
  0_{2\times(b_n-2)} & \begin{matrix}
    0 & -1 \\
    1 & 0 
  \end{matrix}
\end{pmatrix}, \]
where $0_{a\times b}$ is an $a\times b$ block of zeroes.

\subsection{Enhanced non-compact CY varieties}

An important property of the intersection pairing in the case of compact CY varieties is that it is flat with respect to the Gauss-Manin connection. In the case of variation of mixed Hodge structure we no longer have this property, but we have flatness of the pairing on the graded components $\operatorname{Gr}^\mathcal{W}_i=\mathcal{W}_i/\mathcal{W}_{i-1}$. This fact motivates the following definition. 

\begin{dfn} \label{defenh}
An \textit{enhanced CY variety} of dimension $n$ is a pair $(X, \w)$, where $X$ is a CY variety of dimension $n$ and
 \[\w = ( \w_1,\dots ,  \w_i,\dots, \w_{{b_n}})^{\rm tr},\quad b_n =\dim H^n(X, \mathbb{C}),  \]
 is   a basis of $H^n(X, \mathbb{C})$ such that:
  \begin{itemize}
     \item  It respects the Hodge filtration, i.~e.
    \[  (\w_1, \ldots, \w_{\dim F^k}) \]
    spans $F^k$; 
     \item it respects the weight filtration, i.~e.
    \[   (\w_{b_n - \dim W_k+1}, \ldots, \w_{b_n}) \]
    spans $W_k$; 
    \item the pairing $\langle -, - \rangle$ on $\operatorname{Gr}^{W}_i$ takes the form of a constant matrix $\Phi_i$ in this basis for all $i$. 
 \end{itemize}   
\end{dfn}

The construction of the moduli space of enhanced varieties, as well as the Gauss-Manin Lie algebra generalize to this setting.  The difference is the additional constraints imposed on the entries of $S$ by the second condition above.  It sets some of the lower-diagonal blocks to zero, in a similar fashion as the Hodge filtration condition sets the upper-diagonal blocks of $S$ to zero. Moreover, the pairing condition of the Definition \ref{dfn:enhco} is imposed on each graded component of the weight filtration separately.
\section{Examples from mirror symmetry}
The construction of section \ref{sec:nc} is applied to two families of non-comact CY threefolds, mirrors to local $\mathbb{P}^2$ and local $\mathbb{F}_2$. We construct the rings of regular functions on $\mathsf{T}$ and the Gauss-Manin Lie algebra and prove Theorems \ref{thm:1}, \ref{thm:2} and \ref{thm:3}.
\subsection{Local $\mathbb{P}^2$}
\subsubsection{Setup} The first example we study is the total space of the canonical bundle over the projective plane ${\mathbb{P}^2}$. It is defined by the toric charge vectors $Q=(-3,1,1,1)$ and the mirror family is given by
\[
\mathcal{X} = \left\{   u v + a_0 y_0 + a_1 y_1 + a_2 y_2 + a_3 y_3 = 0  \; | ~ u, v \in \mathbb{C}, y_k \in \mathbb{C}^{*}, \frac{y_1 y_2 y_3}{y_0^3} = 1 \right\}.
\]
We have \[ F_{\pmb{a}}(t_1, t_2) = a_0 + a_1 t_1 + a_2 t_2 + \frac{a_3}{t_1 t_2}. \]
The torus invariant coordinate reads
\[
    z=-\frac{a_1a_2a_3}{a_0^3},
\]
and the Picard-Fuchs operator is
\[
    \mathcal{L}=(\theta^2-3z(3\theta+1)(3\theta+2))\theta,\quad \theta=z\frac{d}{dz}.
\]
The middle cohomology can be written as 
\[H^3(X_{z}) \cong \mathcal{R}_{F_{\pmb{a}}} = \mathbb{C} 1 \oplus \mathbb{C} t_0  \oplus \mathbb{C} t_0^2, \]
where we denote the general fibre of $\mathcal{X}$ by $X_z$ to emphasize the dependence on $z$. The mixed Hodge structure described in the previous section is 
\[
 0={W}_2\subset {W}_3 ={W}_4 = {W}_5 = \mathbb{C}  t_0 \oplus \mathbb{C}  t_0^2 \subset {W}_6 = \mathcal{R}_{F_{\pmb{a}}},
\]
\[
0={F}^4 \subset {F}^3 = \mathbb{C}  1 \subset {F}^2 =  \mathbb{C}  1 \oplus \mathbb{C}  t_0 \subset {F}^1 = {F}^0 = \mathcal{R}_{F_{\pmb{a}}}.
\]
We can generate it with the help of $\mathcal{D}_{a_0}$:
\[ \mathcal{D}_{a_0} (1) = \left( \frac{\pa}{\pa a_0} + t_0 \right) (1) = t_0, \qquad \mathcal{D}_{a_0} (t_0) = t_0^2.  \]
There is a patch of the moduli space $\mathsf{B}$, where  \[\th_{a_0} = -3\th = -3 z \frac{d}{d z} = - 3 \th, \] holds. Thus we can take $\left( \Omega, \nabla_\th \Omega, \nabla^2_\th \Omega \right)$ as a basis of $H^3(X_z)$ that satisfies first two conditions from the Definition \ref{defenh}. The only non-zero pairing is between $\nabla_\th \Omega$ and $\nabla^2_\th \Omega$. It is the Yukawa coupling $Y_{111}$ for local $\mathbb{P}^2$, namely
\[ \langle \nabla^2_\th \Omega, \nabla_\th \Omega \rangle  =  Y_{111}= -\frac{1}{3(1-27z)}. \] 
In the basis 
\[ \w = \left( \Omega, \nabla_\th \Omega,  Y_{111}^{-1}\nabla^2_\th \Omega \right), \]
the pairing has the form 
\[ \begin{pmatrix}
 0 &0&0\\
 0&0&-1\\ 
 0&1&0 
\end{pmatrix}.  \]

\subsubsection{Moduli space $\mathsf{T}$}
Let us fix  $\Phi_3 = \begin{pmatrix}
 0 & -1 \\
 1 &0 
\end{pmatrix}$ and $\Phi_6 = \begin{pmatrix}
 0
\end{pmatrix}$. 
We saw in the previous subsection that $\w$ is a basis satisfying the conditions of the Definition \ref{defenh}, therefore we can construct (locally) the moduli space $\mathsf{T}$ by considering the complex structure modulus $z$ and algebraically independent entries of the matrix $S$. 
The requirement on $S$ to preserve the Hodge filtration restricts it to the lower-diagonal form 
\[ S = \begin{pmatrix}
 s_{11} & 0 & 0 \\
 s_{21} & s_{22} & 0 \\ 
 s_{31} & s_{32} & s_{33} 
\end{pmatrix}.  \] 
The second condition -- preservation of the weight filtration, sets $s_{21} = s_{31} = 0$. 
Next, we have to satisfy the condition that \[
\langle -, - \rangle \rvert_{\operatorname{Gr}^W_3} = \Phi_3, \qquad \langle -, - \rangle \rvert_{\operatorname{Gr}^W_6} = \Phi_6,  \] 
note that $\operatorname{Gr}^W_4$ and $\operatorname{Gr}^W_5$ are empty.  The second condition is empty, and the first one implies $s_{22} = s_{33}^{-1}$, thus we have 
\[S = \begin{pmatrix}1&0&0\\ 0 & s_{33}^{-1} &0\\ 0&s_{32}&s_{33}\end{pmatrix}. \]
The element $s_{11}$ corresponds to the normalization of the holomorphic $3$-form. One can check that it is constant with respect to the modular vector field and we set it to be $1$ for simplicity. 
\subsubsection{Modular vector field and Gauss-Manin Lie algebra}
Modular vector fields have the form 
\[ \nabla_{\mathsf{R}} \wt  = \begin{pmatrix} 0 & * & 0 \\
0 & 0 & * \\ 
0 &0 & 0 \end{pmatrix} \wt. \] 
We make a choice of normalization inspired by \cite{alim2014special} 
\be \nabla_{\mathsf{R}} \wt  = \begin{pmatrix} 0 &  Y^{-1}(\mathsf{t}) & 0 \\
0 & 0 & 1 \\ 
0 &0 & 0 \end{pmatrix} \wt.  \label{prop4one}\ee 
The Gauss-Manin connection matrix in the basis $\w$ reads
\[
    \nabla_{\theta}\omega=\begin{pmatrix}0&1&0\\ 0&0&Y_{111}\\ 0&18z &0 \end{pmatrix}\omega = A_\th \w .
\]
Let  \[ \mathsf{R} = i_z \frac{\pa}{\pa z } + i_{32} \frac{\pa}{\pa s_{32}} + i_{33} \frac{\pa}{\pa s_{33 }}, \] 
then, \be \nabla_{\mathsf{R}} \wt  =\left[ \frac{i_z}{z} ( S A_\th S^{-1}) + i_{32} \frac{\pa S}{\pa s_{32}} S^{-1} + i_{33} \frac{\pa S}{\pa s_{33}} S^{-1} \right] \wt. \label{prop4two} \ee
\begin{prop}\label{prop:mvf}
There exists a unique modular vector field $\mathsf{R}$ on $\mathsf{T}$. 
It is given by
\[
    \mathsf{R}=\frac{z s_{33}^2}{Y_{111}}\frac{\partial}{\partial z} - s_{32} s_{33}^2\frac{\partial}{\partial s_{33}}+\frac{18z s_{33}^3}{Y_{111}}\frac{\partial}{\partial s_{32}}. 
\]
\end{prop}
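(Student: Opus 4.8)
The plan is to solve the defining equation of a modular vector field directly, as a linear system over $\mathcal{O}_\mathsf{T}$. I would write the unknown field as $\mathsf{R} = i_z\frac{\pa}{\pa z} + i_{32}\frac{\pa}{\pa s_{32}} + i_{33}\frac{\pa}{\pa s_{33}}$ with coefficients $i_z, i_{32}, i_{33}\in\mathcal{O}_\mathsf{T}$ to be found, together with the still-undetermined normalizing function $Y(\mathsf{t})$ appearing in \eqref{prop4one}. In this example the compatible frame $\w$ varies only with the base coordinate $z$, while $S$ depends only on $s_{32}, s_{33}$, so the term $\frac{\pa S}{\pa z}$ drops out of \eqref{eq:mvfco} and \eqref{prop4two} reads $\nabla_{\mathsf{R}}\wt = M\wt$ with $M = \frac{i_z}{z}\,S A_\theta S^{-1} + i_{32}\,\frac{\pa S}{\pa s_{32}}S^{-1} + i_{33}\,\frac{\pa S}{\pa s_{33}}S^{-1}$.

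The first step is to compute $M$ explicitly from the triangular matrices $S$ and $S^{-1}$. A short computation shows that the entire first column of $M$, together with its $(1,3)$ entry, vanish identically: the first column vanishes because $A_\theta$ has zero first column and $S^{-1}e_1 = e_1$, while $\frac{\pa S}{\pa s_{32}}$ and $\frac{\pa S}{\pa s_{33}}$ annihilate $e_1$; and the $(1,3)$ entry vanishes because the first row of each of the three terms is supported only in the $(1,2)$ slot. Hence $M$ automatically has the strictly block-superdiagonal shape demanded by Definition \ref{defmf}, and matching $M$ against the normalized target \eqref{prop4one} reduces to equating the five entries $(1,2), (2,2), (2,3), (3,2), (3,3)$.

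The second step is to solve this reduced system. The $(2,3)$ entry, normalized to $1$, involves only $i_z$ and yields $i_z = z s_{33}^2/Y_{111}$ at once. The diagonal entries $(2,2)$ and $(3,3)$ coincide as equations and give $i_{33} = -s_{32}s_{33}^2$; substituting this into the $(3,2)$ entry (which also carries the $(3,2)$ entry of $A_\theta$ coming from the Picard--Fuchs operator $\mathcal{L}$) fixes $i_{32} = 18 z s_{33}^3/Y_{111}$. Finally the $(1,2)$ entry determines the normalizing function, $Y(\mathsf{t}) = Y_{111}/s_{33}^3$. Each unknown is pinned down with no remaining freedom once the $(2,3)$-normalization is imposed, so the modular vector field is unique; since the resulting coefficients are rational (indeed regular) functions on $\mathsf{T}$, it exists.

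The only place where geometry rather than bookkeeping enters is the consistency of the apparently overdetermined system: five equations for three coefficients. The system closes precisely because the two diagonal equations coincide, and because the $(1,2)$ and $(2,3)$ entries are forced to be proportional with a fixed ratio (so that only the overall scale, removed by the normalization, is free). Both phenomena trace back to the constraint $s_{22} = s_{33}^{-1}$ imposed by flatness of the pairing on $\operatorname{Gr}^W_3$, and to Griffiths transversality, which dictates the strictly block-superdiagonal form of $A_\theta$. The hard part of a careful write-up is thus not the algebra but displaying this redundancy of the diagonal equations explicitly, as it is exactly what guarantees that the direct computation yields a genuine and unique solution rather than an inconsistency.
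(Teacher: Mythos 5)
Your proof is correct and follows essentially the same route as the paper's: the paper likewise equates the right-hand sides of \eqref{prop4one} and \eqref{prop4two} and solves the resulting linear system for $(i_z,i_{32},i_{33})$, just without recording, as you do, which entries of the system vanish identically and why the two diagonal equations coincide. One small remark: if one takes the $(3,2)$ entry $+18z$ of the matrix $A_\theta$ exactly as displayed in the paper, the $(3,2)$ equation yields $i_{32}=-18z s_{33}^3/Y_{111}$, whereas the sign $+18z s_{33}^3/Y_{111}$ claimed by you and by the Proposition is the one produced by the Picard--Fuchs operator itself (which gives $\nabla_\theta \omega_3 = -18z\,\omega_2$ for $Y_{111}=-\tfrac{1}{3(1-27z)}$), so this discrepancy is a sign typo in the paper's displayed connection matrix rather than a gap in your argument.
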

\begin{proof}
Using non-degeneracy of the Gauss-Manin connection we equate the right hand sides of the equations \eqref{prop4one} and \eqref{prop4two}. In such a way one obtains a system of linear equations that has a unique solution for $(i_z, i_{32}, i_{33})$.
\end{proof}

Introduce $\dot{f} = df(\mathsf{R})$ for $f\in \mathcal{O}_{\mathsf{T}}$, then the existence of $\mathsf{R}$ is equivalent to the following differential structure on the ring $\mathcal{O}_{\mathsf{T}}$:
\[
\begin{cases}
    \dot{z} = {z s_{33}^2}{Y_{111}^{-1}}, \\
    \dot{s}_{32}={18z s_{33}^3}{Y_{111}^{-1}},\\
    \dot{s}_{33} = - s_{32} s_{33}^2\,.
  \end{cases}
\]

\begin{customthm}{1}
\label{prop:P2}
There is an isomorphism between the ring of regular functions $\mathcal{O}_{\mathsf{T}}$ equipped with the differential structure $\mathsf{R}$ and the ring $R_3$ of quasi-modular forms on $\Gamma_0(3)$ with the derivation $\pa_\tau$. 
\end{customthm}
\begin{proof}
Define \[ A=\sqrt{-3 }s_{33}, \qquad \alpha = 27 z,  \]  
\[ B=(1-27z)^{1/3}A,\quad  C=3z^{1/3} A, \quad  E= d{\log (B^3 C^3 )(\mathsf{R})},  \]
Then we can rewrite the differential  ring as 
\[\label{eq:ring}
\begin{split}
\dot{\alpha}&=\alpha(1-\alpha)A^2,\\
\dot{A}&=\frac{1}{6} \left( A E+C^3 - B^3\right),\\
\dot{B}&=\frac{1}{6}B(E-A^2),\\
\dot{C}&=\frac{1}{6}C(E+A^2),\\
\dot{E}&=\frac{1}{6}(E^2-A^4),
\end{split}
\]
which is the differential ring $R_3$ of quasi-modular forms on  $\Gamma_0(3)$ in \ref{sec:app}.
\end{proof}

\subsubsection{Gauss-Manin Lie algebra}
\begin{customthm}{3}[Part 1]
\label{thm:GMLAP2}
The Gauss-Manin Lie algebra for local $\mathbb{P}^2$ is spanned by
\[
    \begin{split}
        &\mathsf{R}=\frac{z s_{33}^2}{Y_{111}}\frac{\partial}{\partial z}- s_{32} s_{33}^2 \frac{\partial}{\partial s_{33}}+\frac{18z s_{33}^3}{Y_{111}}\frac{\partial}{\partial s_{32}},\\
        &\mathsf{R}_{\mathfrak{g}_{33}}=s_{33}\frac{\partial}{\partial s_{33}}+s_{32}\frac{\partial}{\partial s_{32}},\\
        &\mathsf{R}_{\mathfrak{g}_{32}}=s_{33}\frac{\partial}{\partial s_{32}},
    \end{split}
\]
and it is isomorphic to $\mathfrak{sl}_2(\mathbb{C})$.
\end{customthm}
\begin{proof} $\operatorname{Lie}(\mathsf{G})$ is generated by \[ \mathfrak{g}_{33} = \left. \left(\frac{\pa}{\pa s_{33}} S \right) S^{-1} \right\vert_{S=\operatorname{Id}}= \begin{pmatrix}
  0&0&0\\
  0&-1 & 0 \\
  0& 0 & 1 
  \end{pmatrix}, \]
  \[ \mathsf{g}_{21} = \left. \left(\frac{\pa}{\pa s_{32}} S \right) S^{-1} \right|_{S=\operatorname{Id}}= \begin{pmatrix}
  0&0&0\\
  0&0 & 0 \\
  0& 1 & 0 
  \end{pmatrix}. \]
The corresponding vector fields are exactly $\mathsf{R}_{\mathsf{g}_{33}}$ and $\mathsf{R}_{\mathsf{g}_{32}}$. 
It can be checked explicitly that commutators are
\[ [\mathsf{R}, \mathsf{R}_{\mathfrak{g}_{32}}] = \mathsf{R}_{\mathfrak{g}_{33}}, \quad [\mathsf{R}, \mathsf{R}_{\mathfrak{g}_{33}}] = -2 \mathsf{R}, \quad [ \mathsf{R}_{\mathfrak{g}_{32}}, \mathsf{R}_{\mathfrak{g}_{33}}] = 2 \mathsf{R}_{\mathfrak{g}_{32}}. \]
\end{proof}

\subsection{Local $\mathbb{F}_2$} The canonical bundle over the second Hirzebruch surface $K_{\mathbb{F}_2}$ is defined by the toric charge vectors $Q_1=(-2,1,0,1,0)$ and $Q_2=(0,0,1,-2,1)$ and the mirror family is given by
\[
\mathcal{X} = \left\{   u v + \sum_{i=0}^4 a_i y_i = 0  \; | ~ u, v \in \mathbb{C}, y_k \in \mathbb{C}^{*}, \frac{y_1 y_3}{y_0^2} = 1, \frac{y_2 y_4}{y_3^2} = 1\right\}.
\]
We have 
\[F_{\pmb{a}}(t_1,t_2) = a_0 + a_1 t_1 + a_2 t_2 + \frac{a_3}{t_1} + \frac{a_4}{t_1^2 t_2}.\]
The torus-invariant coordinates are 
\[
z_1=\frac{a_1a_3}{a_0^2}\,,\quad z_2=\frac{a_2 a_4}{a_3^2}\,,
\]
and the Picard-Fuchs operators read
\begin{eqnarray*} \label{pff2}
\mathcal{L}_1&=& \theta_1 (\theta_1-2\theta_2)-2 z_1 (2\theta_1+1)\theta_1\,, \\
\mathcal{L}_2&=&\theta_2^2-z_2 (\theta_1-2\theta_2-1)(\theta_1-2\theta_2)\,,\quad \theta_i=z_i\frac{d}{dz_i}.
\end{eqnarray*}
The middle cohomology can be written as 
 \[ H^3(X_z) \cong \mathcal{R}_{F_{\pmb{a}}} = \mathbb{C} 1 \oplus \mathbb{C} t_0 \oplus \mathbb{C} \frac{t_0}{t_1} \oplus \mathbb{C} t_0^2.\]
The mixed Hodge structure is
\[ 0 = W_2 \subset W_3 = \mathbb{C}t_0 \oplus \mathbb{C} t_0^2 \subset W_4 = W_5 = W_3 \oplus \mathbb{C} \frac{t_0}{t_1}  \subset W_6 = \mathcal{R}_{F_{\pmb{a}}},\]
\[
0=F^4 \subset {F}^3 = \mathbb{C}  1 \subset {F}^2 =  \mathbb{C}  1 \oplus \mathbb{C}  t_0 \oplus \mathbb{C} \frac{t_0}{t_1} \subset {F}^1 = {F}^0 = \mathcal{R}_{F_{\pmb{a}}}. \]
Using the Gauss-Manin connection we obtain 
\[  \mathcal{D}_{a_0} (1) = t_0, \qquad \mathcal{D}_{a_0} (t_0) = t_0^2, \qquad \mathcal{D}_{a_3} (1) = \frac{t_0}{t_1} .   \]
On the moduli space $\mathsf{B}$ there is a patch where 
\[ a_0 \frac{\pa}{\pa a_0} = -2 \th_1, \qquad a_3 \frac{\pa}{\pa a_3} = \th_1 - 2 \th_2, \]
therefore in the basis
\[ \left( \Omega, \nabla_{(\th_1 - 2\th_2)} \Omega, \nabla_{\th_1}\Omega, \nabla^2_{\th_1} \Omega \right), \] 
the pairing has the form 
\[\begin{pmatrix}
     0&0&0&0\\
     0&0&0&0\\
     0&0&0&-Y_{111}\\
     0&0&Y_{111}&0
    \end{pmatrix},\]
where $Y_{111}$ is the Yukawa coupling
\[Y_{111}= \frac{1}{(1-4z_1)^2-64z_1^2z_2}.\]

\subsubsection{Moduli space}
Let us fix 
\[ \Phi_3 = \begin{pmatrix} 0 & -1 \\ 1 & 0 \end{pmatrix}, \quad \Phi_4 = \begin{pmatrix} 0 \end{pmatrix}, \quad \Phi_6 = \begin{pmatrix} 0 \end{pmatrix}. \]
In addition to the complex structure moduli $z_1$ and $z_2$ we need to identify the independent elements of the matrix $S$. 
The Hodge filtration condition restricts it to the lower-block diagonal form 
 \[ S = \begin{pmatrix}
  s_{11} & 0&0&0 \\
  s_{21} &s_{22} & s_{23} & 0\\
  s_{31} & s_{32}& s_{33} & 0 \\ 
  s_{41} & s_{42} & s_{43} & s_{44} \\ 
 \end{pmatrix}. \]
The weight filtration condition gives 
 \[ S = \begin{pmatrix}
  s_{11} & 0&0&0 \\
  0 &s_{22} & s_{23} & 0\\
 0 & 0& s_{33} & 0 \\ 
 0 & 0 & s_{43} & s_{44} \\ 
 \end{pmatrix}. \]
The conditions on the pairing on $\operatorname{Gr}_4^W$ and $\operatorname{Gr}_6^W$ are empty and the corresponding condition on $\operatorname{Gr}_3^W$ gives us the final form of $S$:
 \be\label{eq:sF2} S = \begin{pmatrix}
  s_{11} & 0&0&0 \\
  0 &s_{22} & s_{23} & 0\\
 0 & 0& s_{44}^{-1} & 0 \\ 
 0 & 0 & s_{43} & s_{44} \\ 
 \end{pmatrix}. \ee
Moreover, we  again set $s_{11}=1$. 
\subsubsection{Modular vector fields}
We define modular vector fields by 
\begin{equation}\label{eq:R1} \nabla_{\mathsf{R}_1} \wt =\begin{pmatrix}
 0 &  Y^{-1}_1(\mathsf{t}) &  Y^{-1}_2(\mathsf{t}) & 0 \\
 0 &0& 0&   0\\
 0 & 0 & 0 & 1\\
 0  & 0 &  0 & 0 
 \end{pmatrix} \wt,\end{equation}
 and
 \begin{equation}\label{eq:R2}
 \nabla_{\mathsf{R}_2} \wt =\begin{pmatrix}
 0 & Y^{-1}_3(\mathsf{t}) & Y^{-1}_4(\mathsf{t}) & 0 \\
 0 &0& 0&   1\\
 0 & 0 & 0 & 0\\
 0  & 0 &  0 & 0 
 \end{pmatrix} \wt, 
 \end{equation}
 for some $Y_i(\mathsf{t}) \in \mathcal{O}_{\mathsf{T}}$.

\begin{thm}
There exist unique modular vector fields $\mathsf{R}_1,\mathsf{R}_2\in\mathfrak{X}(\mathsf{T})$ satisfying \eqref{eq:R1} and \eqref{eq:R2}. 
\end{thm}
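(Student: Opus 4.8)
The plan is to mimic precisely the existence-and-uniqueness argument used for local $\mathbb{P}^2$ in Proposition \ref{prop:mvf}, now carried out for both $\mathsf{R}_1$ and $\mathsf{R}_2$ in parallel. First I would write a general rational vector field on $\mathsf{T}$ in the chosen coordinate patch. The independent coordinates are the two complex-structure moduli $z_1,z_2$ together with the algebraically independent entries of the matrix $S$ in \eqref{eq:sF2}, namely $s_{22}, s_{23}, s_{33}=s_{44}^{-1}, s_{43}, s_{44}$; so I would write $\mathsf{R}_a = i^{(a)}_{z_1}\partial_{z_1}+i^{(a)}_{z_2}\partial_{z_2}+i^{(a)}_{22}\partial_{s_{22}}+i^{(a)}_{23}\partial_{s_{23}}+i^{(a)}_{43}\partial_{s_{43}}+i^{(a)}_{44}\partial_{s_{44}}$ for $a=1,2$, with unknown rational coefficient functions.

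Next I would compute the left-hand sides of \eqref{eq:R1} and \eqref{eq:R2} explicitly. Using the change-of-frame formula \eqref{eq:mvfco}, one has $\nabla_{\mathsf{R}_a}\wt = \bigl[\tfrac{i^{(a)}_{z_1}}{z_1}(S A_{\theta_1}S^{-1}) + \tfrac{i^{(a)}_{z_2}}{z_2}(S A_{\theta_2}S^{-1}) + \sum_{jk} i^{(a)}_{jk}(\partial_{s_{jk}}S)S^{-1}\bigr]\wt$, where $A_{\theta_1},A_{\theta_2}$ are the Gauss-Manin connection matrices in the base frame $\w$ coming from the two Picard-Fuchs operators $\mathcal{L}_1,\mathcal{L}_2$. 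I would first record these two connection matrices in the basis $(\Omega,\nabla_{(\theta_1-2\theta_2)}\Omega,\nabla_{\theta_1}\Omega,\nabla^2_{\theta_1}\Omega)$ by differentiating the frame and reducing via the Picard-Fuchs relations, exactly as the $3\times3$ matrix $A_\theta$ was produced in the local $\mathbb{P}^2$ case. Conjugating by $S$ and adding the $(\partial_{s_{jk}}S)S^{-1}$ terms gives a matrix whose entries are linear in the six unknowns $i^{(a)}_{\bullet}$ over $\mathcal{O}_\mathsf{T}$.

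The heart of the proof is then purely linear algebra: equating this $4\times4$ matrix entry-by-entry to the prescribed right-hand side of \eqref{eq:R1} (respectively \eqref{eq:R2}) yields an overdetermined linear system for the six coefficient functions, and I must show it has a unique solution. The key structural point, which makes existence work despite overdetermination, is that the filtration-preserving form of $S$ forces both $SA_{\theta_i}S^{-1}$ and the $(\partial_{s_{jk}}S)S^{-1}$ generators to lie in the Lie algebra of lower-block-triangular matrices preserving the graded pairings; consequently the block-structure constraints built into the right-hand sides of \eqref{eq:R1}–\eqref{eq:R2} (the zero blocks dictated by $\operatorname{Gr}^W$ and Griffiths transversality) are automatically compatible, and only the genuinely free starred positions impose equations. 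Solving those, and defining $Y^{-1}_i(\mathsf{t})$ as the resulting entries, produces the vector fields; uniqueness follows because the Gauss-Manin connection is non-degenerate, so distinct vector fields give distinct connection matrices.

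The main obstacle I anticipate is bookkeeping rather than conceptual: because $\mathsf{F}_2$ has two moduli and a $W_4$ graded piece that is genuinely nontrivial (unlike local $\mathbb{P}^2$, where $\operatorname{Gr}^W_4$ was empty), the matrix $S$ has the off-diagonal entry $s_{23}$ mixing the weight-$4$ and weight-$3$ pieces, and I must verify that the two prescribed right-hand sides are simultaneously achievable, i.e. that the combined system for $\mathsf{R}_1$ and $\mathsf{R}_2$ is consistent and that the rational coefficients have no spurious poles beyond the expected loci where $Y_{111}$ or $\det S$ vanish. I expect the consistency to hold precisely because the two chosen target matrices together exhaust the admissible starred pattern dictated by Griffiths transversality, but confirming this requires carefully tracking how $s_{23}$ and the two Yukawa-type couplings enter the $(1,2)$ and $(1,3)$ entries. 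I would therefore close the argument by exhibiting the unique solution explicitly (as was done in Proposition \ref{prop:mvf}) and remarking that its rationality and uniqueness are immediate from the non-degeneracy of $\nabla$.
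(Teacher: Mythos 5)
Your proposal takes essentially the same approach as the paper: the paper's proof is a one-line reduction to the argument of Proposition \ref{prop:mvf}, namely writing a general rational vector field, expressing $\nabla_{\mathsf{R}_a}\wt$ through the transformed Gauss--Manin connection as in \eqref{eq:mvfco}, and equating entry-by-entry with the prescribed matrices \eqref{eq:R1} and \eqref{eq:R2} to obtain a linear system with a unique solution. The consistency and block-structure checks you flag are exactly the bookkeeping the paper leaves implicit, so your argument is a correct (and more detailed) rendering of the paper's own proof.
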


The proof is analogous to the proof of Proposition \ref{prop:mvf}.

The existence of the modular vector fields is equivalent to two differential structures on $\mathcal{O}_{\mathsf{T}}$. By defining \[\p_{\tau_1}f \coloneqq df(\mathsf{R}_1)\quad \text{and}\quad \p_{\tau_2}f \coloneqq df(\mathsf{R}_2) \] 
for $f \in \mathcal{O}_{\mathsf{T}}$, we can write
\begin{equation}
\begin{split} 
\p_{\tau_1}\: z_1 =& z_1 s_{44}^2 s_{22}^{-1}\left( s_{22} + (1- 4 z_1)(1 - 4 z_2) s_{23} - 4 z_1 (1 + 4 z_2) s_{22} \right), \\
\p_{\tau_1}\; z_2 =& 2 z_2 (-1 + 4 z_2) s_{44}^2 s_{22}^{-1} \left( s_{23} + 4 z_1 s_{22} \right), \\
\p_{\tau_1} s_{22} =& 4 z_2 s_{44}^2  \left( s_{23}   +4 z_1 s_{22}  \right), \\
\p_{\tau_1} s_{23} =& 2 z_1 s_{44}^2 s_{22}^{-1} \left( (s_{22} + s_{23})^2 - 4 z_2 s_{23}^2  \right), \\
\p_{\tau_1} s_{43} =&  -2 z_1 s_{44}^2 s_{22}^{-1} \left( (-1+ 4z_2) s_{23} ( - s_{43}+ 4 z_2 s_{44})  \right.  \\ & \left. + s_{22} \left( s_{43} - 4 z_2 s_{43} + 4 z_2 (1-10 z_1 + 8 z_1 z_2) s_{44} \right) \right),  \\
\p_{\tau_1} s_{44}  =& - s_{44}^2 s_{22}^{-1} \left( 2 z_1 ( -1 + 4 z_2) s_{23} s_{44} + s_{22} \left( s_{43} + (-1 + 4 z_2) s_{44} \right) ) \right), \\
\p_{\tau_2} \: z_1 =& z_1 ( 1 - 4 z_1) ( 1 - 4 z_2) s_{44} s_{22}^{-1},  \\
\p_{\tau_2} \: z_2 =& 2 z_2 (1 - 4 z_2)s_{44} s_{22}^{-1},  \\
\p_{\tau_2} s_{22} =&  - 4 z_2 s_{44}, \\
\p_{\tau_2} s_{23} =& s_{43} + 2 z_1 s_{44} +2 z_1 s_{23} s_{44} s_{22}^{-1} + 8 z_1 z_2  s_{44}( 1 - s_{23} s_{22}^{-1}), \\
\p_{\tau_1} s_{43} =& 2 z_1 (1 - 4 z_2) s_{44} s_{22}^{-1} (s_{43} - 4 z_2 s_{44},\\
\p_{\tau_2} s_{44} =& 2 z_1 (-1 + 4 z_2) s_{44}^2 s_{22}^{-1} .
\end{split}
\end{equation}
\begin{customthm}{2} \label{prop:F2}
The differential ring $ \left( \mathcal{O}_{\mathsf{T}}, \pa_{\tau_1}, \pa_{\tau_2} \right)$ contains as differential subring the ring $R_2$ of quasi-modular forms on $\G_0(2)$ (see  \ref{sec:app}). 
\end{customthm}
\begin{proof}
By introducing a new variable
\[ u = \frac{64 z_1^2 z_2}{(1 - z_1)^2}, \]
and  
\[ A = \sqrt{2(1-4 z_1)} s_{44}, \qquad B = (1-u)^{1/4} A,\qquad C = u^{1/4} A, \] 
\[ E = \p_{\tau_1}\log(B^4 C^4) = \frac{2 - 8 s_{33} s_{43} - 8 z_1(1+8 z_2)}{s_{33}^2}, \] 
the differential ring relations can be rewritten as
\[\begin{aligned} \p_{\tau_1} u &= u(1-u) A^2, \\
\p_{\tau_1} A &= \frac{1}{8} A( E + \frac{C^4 - B^4}{A^2}), \\ 
\p_{\tau_1} B &= \frac{1}{8} B (E - A^2), \\ 
\p_{\tau_1} C &= \frac{1}{8} C (E + A^2), \\
\p_{\tau_1} E &= \frac{1}{8} (E^2 - A^4), \end{aligned}\]
which is the differential ring $R_2$. Furthermore, one can check that all of the functions $u, A,B,C,E$ are independent of $\tau_2$.
\end{proof}
\subsubsection{Gauss-Manin Lie algebra}
\begin{customthm}{3}[Part 2]\label{thm:GMLAF2}
The Gauss-Manin Lie algebra for local $\mathbb{F}_2$ is isomorphic to the semi-direct product $\mathfrak{L}_V \rtimes   \mathfrak{sl}_2(\mathbb{C})$, where $\mathfrak{L}_V$ denotes the Lie algebra of type V, as classified in \cite{Bianchi}, corresponding to an ideal of $\mathfrak{G}$ generated by $\mathsf{R}_{2}$, $\mathsf{R}_{\mathfrak{g}_{22}}$ and $\mathsf{R}_{\mathfrak{g}_{23}}$.
\end{customthm}
\begin{proof}
In this case $\operatorname{Lie}(\mathsf{G})$ is generated by 
\[ \mathfrak{g}_{22} = \left. \left(\frac{\pa}{\pa s_{22}} S \right) S^{-1} \right|_{S=\operatorname{Id}}= \begin{pmatrix}
0 & 0 & 0&0 \\
0&1&0&0\\
0 & 0 & 0&0 \\
0 & 0 & 0&0 \\
\end{pmatrix}, \] 
\[ \mathfrak{g}_{23} = \left. \left(\frac{\pa}{\pa s_{23}} S \right) S^{-1} \right|_{S=\operatorname{Id}}= \begin{pmatrix}
0 & 0 & 0&0 \\
0&0&1&0\\
0 & 0 & 0&0 \\
0 & 0 & 0&0 \\
\end{pmatrix}, \] 
\[ \mathfrak{g}_{43} = \left. \left(\frac{\pa}{\pa s_{43}} S \right) S^{-1} \right|_{S=\operatorname{Id}}= \begin{pmatrix}
0 & 0 & 0&0 \\
0 & 0 & 0&0 \\
0 & 0 & 0&0 \\
0&0&1&0\\
\end{pmatrix}, \] 
\[ \mathfrak{g}_{44} = \left. \left(\frac{\pa}{\pa s_{44}} S \right) S^{-1} \right|_{S=\operatorname{Id}}= \begin{pmatrix}
0 & 0 & 0&0 \\
0&0&0&0\\
0 & 0 & -1&0 \\
0 & 0 & 0&1 \\
\end{pmatrix}.  \] 
The associated vector fields $\mathsf{R}_{\mathfrak{g}_{ij}}$ on $\mathsf{T}$ defined by $  \nabla_{\mathsf{R}_{\mathfrak{g}_{ij}}} \wt = \mathfrak{g}_{ij} \wt $ are 
\[
\begin{split}
\mathsf{R}_{\mathfrak{g}_{22}} &= s_{22} \frac{\pa}{\pa s_{22}}+s_{23} \frac{\pa}{\pa s_{23}},  \\
\mathsf{R}_{\mathfrak{g}_{23}} &= s_{44}^{-1} \frac{\pa}{\pa s_{23}},  \\
\mathsf{R}_{\mathfrak{g}_{43}} &= s_{44}^{-1} \frac{\pa}{\pa s_{43}},  \\
\mathsf{R}_{\mathfrak{g}_{44}} &= s_{44} \frac{\pa}{\pa s_{44}}+s_{43} \frac{\pa}{\pa s_{43}}.
\end{split}
\]
Together with $\mathsf{R}_1$ and $\mathsf{R}_2$ these vector fields form the Gauss-Manin Lie algebra $\mathfrak{G}$ and the commutation relations are the following: 
\begin{align*}
    &[ \mathsf{R}_1, \mathsf{R}_{\mathfrak{g}_{44}}] = -2 \mathsf{R}_1, & &[ \mathsf{R}_1, \mathsf{R}_{\mathfrak{g}_{43}}] = \mathsf{R}_{\mathfrak{g}_{44}}, & &[\mathsf{R}_{\mathfrak{g}_{43}},\mathsf{R}_{\mathfrak{g}_{44}}] = 2 \mathsf{R}_{\mathfrak{g}_{43}}, \\ 
    &[ \mathsf{R}_1,\mathsf{R}_2] = 0, & &[ \mathsf{R}_1,\mathsf{R}_{\mathfrak{g}_{23}}] =\mathsf{R}_2, & &[\mathsf{R}_1, \mathsf{R}_{\mathfrak{g}_{22}}] = 0, \\ 
    &[ \mathsf{R}_{\mathfrak{g}_{43}}, \mathsf{R}_2]  = \mathsf{R}_{\mathfrak{g}_{23}}, & &[\mathsf{R}_{\mathfrak{g}_{43}}, \mathsf{R}_{\mathfrak{g}_{23}}]=0, & &[\mathsf{R}_{\mathfrak{g}_{43}},\mathsf{R}_{\mathfrak{g}_{22}}] = 0, \\ 
     &[ \mathsf{R}_{\mathfrak{g}_{44}}, \mathsf{R}_2] = \mathsf{R}_2, & & [\mathsf{R}_{\mathfrak{g}_{44}}, \mathsf{R}_{\mathfrak{g}_{23}}] =  - \mathsf{R}_{\mathfrak{g}_{23}}, & & [ \mathsf{R}_{\mathfrak{g}_{44}}, \mathsf{R}_{\mathfrak{g}_{22}}] = 0, \\
     &[ \mathsf{R}_2, \mathsf{R}_{\mathfrak{g}_{23}}]= 0, & &[\mathsf{R}_2, \mathsf{R}_{\mathfrak{g}_{22}}] = \mathsf{R}_2, & &[\mathsf{R}_{\mathfrak{g}_{23}}, \mathsf{R}_{\mathfrak{g}_{22}}] = \mathsf{R}_{\mathfrak{g}_{23}} . 
\end{align*}
The first line above is the $\mathfrak{sl}_2(\mathbb{C})$ Lie algebra corresponding to the quasi-modular forms. The last line is the 3-dimensional algebra $\mathfrak{L}_V$ according to the Bianchi's classification and it is an ideal in the Gauss-Manin Lie algebra  $\mathfrak{G}$.  Therefore, we have  \[\mathfrak{G}  \cong \mathfrak{L}_V \rtimes \mathfrak{sl}_2(\mathbb{C}) .  \] 
\end{proof}

\section{Conclusion}
In this article we studied moduli spaces of mirror non-compact CY threefolds enhanced with choices of differential forms in their middle cohomology. Certain vector fields contracted with the Gauss-Manin connection give rise in this context to a Lie-algebra which was put forward in \cite{Alim:2016} for arbitrary CY threefolds, generalizing an earlier similar construction for the elliptic curve \cite{Movasati:20113} and the mirror quintic \cite{Movasati:20112}.

In particular the resulting Lie algebra in the case of elliptic curves \cite{Movasati:20113} is isomorphic to the $\mathfrak{sl}_2 (\mathbb{C})$ Lie algebra of derivations associated to quasi-modular forms. The appearance of this Lie algebra as a Lie subalgebra of the full Gauss--Manin Lie algebra for families of CY threefolds is often connected to the appearance of classical quasi-modular forms. An $\mathfrak{sl}_2(\mathbb{C})$ Lie subalgebra was for example studied in the context of elliptically fibered CY manifolds \cite{Haghighat:2015}, where the modularity is originating from the elliptic fibers. In our work we explored a different source of classical modular objects\footnote{We use the terminology classical modular objects to refer to modular structures which have been widely studied, such as quasi-modular forms, Siegel and Jacobi forms. For generic compact CY manifolds the analogues of the modular structures are referred to as CY modular forms in \cite{Movasati:2017} and are in general not expressible in terms of classical modular forms.} which appear in the moduli spaces of CY geometries, namely the case of the mirror families of non-compact CY threefolds. The appearance of classical modular forms in this context is related to the mirror curve.

We focused on the cases where the mirror curves of the mirror geometries are of genus one and studied the enhanced moduli spaces and the Gauss-Manin connection of the variation of mixed Hodge structure. We identified the $\mathfrak{sl}_2(\mathbb{C})$ Lie subalgebra corresponding to the algebra of derivations of quasi-modular forms in the cases of the mirrors of local $\mathbb{P}^2$ and local $\mathbb{F}_2$. We note that the mirror geometry of local $\mathbb{F}_2$ has two parameters, although the moduli space is only one-dimensional, this was already addressed, for instance, in \cite{Aganagic:2008} and is related to the fact that $\mathbb{F}_2$ has two distinct curve classes but only one non-trivial four-cycle. On the level of the Gauss-Manin Lie algebra we note that the additional parameter gives rise to an additional vector field, enlarging the $\mathfrak{sl}_2(\mathbb{C})$ algebra of derivations of the quasi-modular forms in this case. It would be interesting to find an interpretation in the context of modular forms for this enhancement.

It is natural to expect that the study of the analogous moduli spaces of enhanced mirror CY families where the mirror curve is of genus higher than one would give rise to algebraic structures associated to modular objects associated to higher genus curves. The GMCD program for certain genus two curves was recently studied in \cite{cao2019gauss} giving rise to differential rings of Siegel modular forms and their associated algebraic structures. Differential rings attached to genus two curves were studied in \cite{klemm2015direct,ruan2019genus} in the context of mirror symmetry, it would be interesting to recover the corresponding Gauss-Manin Lie algebras in these cases.

\newcommand{\etalchar}[1]{$^{#1}$}


\end{document}